\title[Discrete and continuous time RL]{Reconciling Discrete-Time Mixed Policies and Continuous-Time Relaxed Controls in Reinforcement Learning and Stochastic Control}%
\author{Ren\'e Carmona$^{1}$}
\thanks{$^{1}$Department of Operations Research and Financial Engineering \& Program in Applied and Computational Mathematics, Princeton NJ 08544, USA, (\href{mailto:rcarmona@princeton.edu}{rcarmona@princeton.edu})}
\author{Mathieu Lauri\`ere$^{2}$}
\thanks{$^{2}$Shanghai Frontiers Science Center of Artificial Intelligence and Deep Learning; NYU-ECNU Institute of Mathematical Sciences at NYU Shanghai; NYU Shanghai, 567 West Yangsi Road, Shanghai, 200126, People’s Republic of China, (\href{mailto:mathieu.lauriere@nyu.edu}{mathieu.lauriere@nyu.edu})}
\thanks{{\bf Acknowledgements.}  The authors were partially supported through funding from AFOSR under the grant FA9550-23-1-0324. }
\date{}
\begin{document}

\begin{abstract}
Reinforcement learning (RL) is currently one of the most prominent methods for optimizing dynamical systems, with breakthrough results across various fields. The framework is based on the concept of a Markov decision process (MDP), leading to a discrete-time optimal control problem. In the RL literature, such problems are typically formulated and solved using mixed policies, from which random actions are sampled at each time step.
Recently, part of the optimal control community has begun investigating continuous-time versions of RL algorithms, replacing MDPs with continuous-time stochastic processes governed by relaxed controls, and asserting a full analogy between the two formulations. In this work, we examine the limitations of this analogy and rigorously establish a connection between the two problems in the case where only the drift term of the continuous-time model is controlled. We prove strong convergence of the RL implementation of mixed strategies as the time discretization mesh tends to zero. We also discuss the technical challenges posed by the possible presence of control in the diffusion component of the state.
\end{abstract}

\maketitle

\section{\textbf{Introduction}}

The spectacular successes of Reinforcement Learning (RL) no longer need to be touted. The litany of remarkable achievements made possible by its implementations includes the solution of problems long considered out of reach, such as the game of Go, robotics, and self-driving cars. More recently, RL has contributed to major advances in large AI systems, including chatbots like ChatGPT and DeepSeek. Mathematical models of RL are typically set in discrete time and, when based on a Markovian structure, are formulated in terms of Markov Decision Processes (MDPs), whose theoretical foundations are well established. See, for example, the classical treatises by Sutton and Barto~\cite{SuttonBarto} and Bertsekas and Shreve~\cite{BertsekasShreve}. While pure (deterministic) controls, leading to pure policies, remain the most natural tools for agents to influence the evolution of a system, the existence of optimal pure controls is not always guaranteed. This motivated the extension of the theory to include random controls, also known as \emph{mixed controls}. Allowing for randomization in the choice of actions brings two major benefits. First, it leads to a compactification of the model, which simplifies the analysis and facilitates existence proofs. One of the earliest and most famous examples is Nash’s theorem on the existence of equilibria in mixed strategies for finite games~\cite{Nash_1951}. Second, from a practical perspective, randomization enables the controller to explore parts of the state space that might otherwise be missed by deterministic or greedy strategies, thereby improving the approximation of optimal solutions. While practitioners often bypass the measure-theoretic difficulties that come with random controls, applied mathematicians can make use of real analysis to rigorously study open-loop models, even in systems involving many agents; see, for example,~\cite{MottePham,CarmonaLauriere_AAP}.

Control theory, in both its deterministic and stochastic forms, has also drawn sustained attention due to its broad range of applications in engineering and the social sciences, including aeronautics, network optimization, economics, and finance. Most of these models are formulated in continuous time and are typically analyzed using Ordinary Differential Equations (ODEs), Partial Differential Equations (PDEs), or Stochastic Differential Equations (SDEs), depending on whether randomness is present. For background on the PDE approach, see~\cite{FlemingSoner}, and for the probabilistic approach based on Backward Stochastic Differential Equations (BSDEs), see~\cite{Carmona_SIAM}. As in the discrete-time setting of RL, proving that optimal controls actually attain the value of the objective function remains a significant challenge. This led to the use of compactification methods similar to those employed in the discrete setting. In continuous time, the corresponding notions are Young measures, introduced by Young~\cite{Young} and further developed by Fleming~\cite{fleming1978generalized,FlemingNisio} for deterministic systems, and \emph{relaxed controls} in the stochastic case. For further background, see the survey by Borkar~\cite{Borkar_survey} or the monograph by Yong and Zhou~\cite{YongZhou}. The compactification of the space of relaxed controls was formalized using the notion of stable convergence, introduced by Jacod and M\'emin in~\cite{JacodMemin}.

The two streams of publications referred to above have developed largely in parallel, with little or no attempt to establish a rigorous connection between them. However, the remarkable successes of RL have generated a wave of interest among \emph{stochastic analysts} working with continuous-time models, some of whom have begun promoting the idea of \emph{continuous-time RL}. Apart from some early contributions such as~\cite{munos2000study,doya2000reinforcement}, the topic has gained momentum only in recent years; see, for example,~\cite{wang2020continuous,Wang_Z_Z_RL,JiaZhou,szpruch2024optimal,han2023choquet,basei2022logarithmic,szpruch2021exploration,kobeissi2022temporal,berthier2022non,kobeissi2022variance,treven2023efficient}, and the recent survey~\cite{hu2022recent}. Applications have emerged particularly in finance~\cite{wang2020continuous,huang2022achieving,hambly2023recent,wu2024reinforcement}, and the framework has been extended in several directions, including mean field games and mean field control; see, e.g.,~\cite{frikha2023actor,li2024policy,liang2024actor,wei2024unified,wei2025continuous}. 
In~\cite{Wang_Z_Z_RL}, the use of a large number of independent sample trajectories is argued to justify, at least informally, the use of the relaxed control formulation through the law of large numbers. The familiar form of randomization used in continuous-time stochastic control is presented there as a natural tool for exploring the state space. In a further attempt to connect the continuous-time and discrete-time approaches to randomization,~\cite{JiaZhou} invokes the construction of white noise in continuous time along with a Markovian projection argument, though the discussion remains largely intuitive. By contrast,~\cite{szpruch2024optimal} cautions against over-relying on white noise in continuous time, and instead recommends the use of mixed controls on a finite time grid. We investigate this question carefully and resolve this point in Section~\ref{se:after_sun} below. 
We do not consider here the effects of entropy regularization, despite its close connection with randomization in~\cite{Wang_Z_Z_RL,JiaZhou,szpruch2024optimal}. In a concurrent and independent line of work,~\cite{jia2025accuracy} analyzes the convergence of dynamics with discretely sampled actions to a system governed by relaxed controls, in a setting where both processes evolve in continuous time. This differs from our approach, which considers a discrete-time model with randomized actions, as is typically done in practical implementations of RL.

In this paper, our objective is to highlight the similarities and differences between the discrete-time and continuous-time approaches, supporting them with rigorous mathematical arguments and convergence proofs. We frequently refer to practical steps taken in RL applications, but our aim is not to propose new algorithms for approximating optimal solutions. We do not focus on optimization itself, but rather on investigating and comparing the foundational structures of the models. We review the historical motivations for introducing randomization in both continuous-time and discrete-time settings, and we demonstrate the risks of assuming too readily that the white-noise nature of mixed controls in discrete time can be translated directly into the use of relaxed controls in continuous time.

Several challenging open questions remain concerning the convergence of discrete-time models on a fine time grid toward their continuous-time counterparts. For instance,~\cite{tallec2019making} studies powerful machine learning algorithms currently employed in RL and argues that, in the regime of \emph{near continuous time}, such algorithms may \emph{collapse} when the time step becomes too small. Due to space constraints, we do not address these issues here.

The paper is organized as follows. Section~\ref{se:pure_controls} reviews the use of pure (i.e., deterministic) controls in the optimization of Markov Decision Processes (MDPs) and in the solution of optimal control problems for stochastic differential equations of It\^o type. Section~\ref{se:compactifications} surveys approaches to the same optimization problems when stochastic controls are allowed. In both cases, the emphasis is on the role of \emph{compactifying} the models to establish mathematical existence results ensuring that the optima are attained. However, we also point out that, in the context of RL applications, randomization in discrete-time models is often introduced with the goal of exploring the state space more effectively. 
The main objective of the paper is first addressed in Section~\ref{se:after_sun}, where we consider the following question: \emph{can the form of randomization used in RL applications provide a foundation for the relaxed control framework in continuous-time models?} In other words, can one meaningfully sample independently from a distribution in continuous time? In this section, we highlight the technical difficulties posed by the lack of measurability of continuous-time white noise processes, and we refute several common claims in the literature suggesting that these issues can be bypassed. 
Section~\ref{se:final} presents a strong approximation result showing that, for It\^o stochastic differential equations in which the volatility is not controlled, the RL-style independent sampling of actions does converge to trajectories of the continuous-time relaxed control model. Section~\ref{se:numeric} provides numerical evidence that the basic upper bound derived in Section~\ref{se:final} is likely to be sharp. It also shows that when the volatility is controlled, the convergence rate becomes \emph{sub-linear}, indicating that in such cases, the same arguments are no longer sufficient to establish convergence. Finally, Section~\ref{se:discussion} explains how the shift to a formulation based on the control of the \emph{martingale problem}, when volatility is controlled, offers a natural perspective and why this approach rules out the kind of discrete-time approximation via RL-type mixed controls that we study in this paper.

\section{\textbf{The State of Affairs with Pure Controls}}
\label{se:pure_controls}

In this section, we review the standard approaches to the solution of the dynamic optimization problems alluded to in the introduction, without any \emph{randomization} of the controls. We use the terminology \emph{pure control}  or \emph{strict control} to specify choices of actions as deterministic functions of the information given at a specific time, typically the past history of the system in the case of \emph{open loop} controls, 
or the history of the state of the system in the case of \emph{closed loop} or \emph{feedback} control, or of the current state of the system in the case of \emph{Markovian} control. 

\subsection{Optimization of MDPs with Pure Control Policies}
\label{sub:MDPs}
Consider an MDP defined by: 1) a state space $S$ which we assume to be a measurable space, for example a Polish space, 2) an action space $A$ which we assume to be a Polish space as well, 3) a (measurable) transition probability kernel $P: S \times A \to \cP(S)$, and 4) a time horizon $T$ which can be a finite integer or $\infty$. In order to emphasize the discrete nature of time, we use the letters, $m$, $n$, $\ldots$ to denote time which we want to think of as integers. When we discuss continuous time models below, we shall use the letters $s$, $t$, $\ldots$ to denote time which we shall think of as non-negative real numbers.

Given a control process $\balpha=(\alpha_n)_{0\le n\le T}$, namely a sequence of random variables with values in $A$, the time evolution of the state of the system is given recursively by the prescription that at time $n+1$, conditioned on the knowledge of the state $X_n$ and the choice of an action $\alpha_n\in A$ at time $n$, the state $X_{n+1}$ is a random element of the state space $S$ with distribution $P(X_n,\alpha_n)$. In notation, we write $X_{n+1}\sim P(X_n,\alpha_n)$. At each time $n$, we assume that the choice of action is made on the basis of the information avalable at that time, in other words, we assume that the random variable $\alpha_n$ is measurable with respect to the past and the present at that time.

\begin{remark}
    \label{re:system_function}
It is sometime convenient to define the time evolution of the state of the system via a \emph{system function} $F:S\times A\times E\to S$
and a dynamical equation
$$
    X_{n+1}=F(X_n,\alpha_n,\epsilon_{n+1})
$$
for an i.i.d. sequence $(\epsilon_n)_{n\ge 1}$, of innovation random variables taking values in a space $E$. This is the approach used in \cite{MottePham,CarmonaLauriere_AAP} to analyze the optimization of mean field MDPs with general \emph{open loop} and \emph{closed loop Markovian} control processes in the presence of common noise.
\end{remark}

Because of their ease of implementation, and because of their ability to preserve the desirable Markov property, \emph{closed loop} control processes in Markovian feedback form have been studied and implemented to a larger extent than the more general \emph{open loop} control processes whose implementation in practical applications is problematic. A closed loop Markovian control process $\balpha=(\alpha_n)_{0\le n\le T}$ is typically determined by a \emph{policy} $\bpi=(\pi_n)_{0\le n\le T}$ given by a sequence of measurable functions $\pi_n:S\to A$ so that the random action $\alpha_n$ taken at time $n$ is given by the value $\pi_n(X_n)$ of the function $\pi_n$ of the state $X_n$ at that time. 

\begin{remark}
It is important to realize that $X_{n+1}$ depends on the actual realizations of the past actions $(\alpha_m)_{m \le n}$, and not just on their distributions. This will come into play when we use \emph{mixed control policies}.    
\end{remark}

\vskip 4pt
The optimization problem for the control of an MDP is to minimize an expected aggregate cost:
$$
    \EE\Bigl[\sum_{n=0}^T \gamma^n f\bigl(X_n,\alpha_n\bigr)\Bigr],
$$
where $f:S\times A\to\RR$ is a one stage cost function, and $\gamma\in (0,1]$ is a discount factor which is systematically set to $\gamma=1$ when the time horizon $T$ is finite.

\vskip 2pt
It is often convenient to introduce the notion of \emph{value function} defined as
\begin{equation}
    \label{fo:discrete_value}
    V(m,x)=\inf_{\balpha}\EE\Bigl[\sum_{n=m}^T \gamma^{n-m} f\bigl(X_n,\alpha_n\bigr)\Bigl| X_m=x\Bigr].
\end{equation}
This definition will remain informal until we specify precisely the set of control processes $\balpha$, or policies, over which the optimization is to take place.
The values of this function for different values of the time variable $m$ satisfy a fundamental equation known as the \emph{Dynamic Programming Principle} (DPP), also known as the \emph{Bellman equation}, and in most cases, computing the value function is done by solving this equation.
Notice that, if the cost function $f$ and the dynamics are not known, it is not straightforward to compute an optimal control directly based on $V$. In RL, it is thus more usual to compute the \emph{state-action value function}, also called \emph{Q-function}. It also satisfies a form of DPP, which is the basis for the celebrated Q-learning algorithm of~\cite{watkins1992q}. We refer e.g. to~\cite{SuttonBarto} for more details. 

\subsection{Optimal Control in Continuous Time}
\label{sub:SDEs}
For the purpose of the present discussion, we consider the case of
an optimal stochastic control problem in which the time evolution of the state of the system is given by a Stochastic Differential Equation (SDE) of the It\^o type.  Our analysis can be generalized in all sorts of directions (SDE's with jumps, singular controls, stochastic games, Mean Field Games, \dots), and even their deterministic analogues which can be treated by setting the volatility to $0$, but we shall restrict ourselves to IT\^o SDEs for the sake of simplicity.

\vskip 2pt
So at least for now, we choose 1) a state space $S=\RR^d$, 2) an action space $A$ assumed to be a closed convex subset of $\RR^k$, 3) a drift vector field $b: [0,T]\times S \times A \to S$, 4) a volatility function $\sigma: [0,T]\times S \times A \to \cM(\RR^d)$, where $\cM(\RR^d)$ is the space of $d\times d$ real matrices, 5) a running cost function $f: S \times A \to \RR$, 6) a discount factor $\gamma=e^{-\beta} \in (0,1]$, and 7) a time horizon $T$ which can be finite or infinite. Let $\bW=(W_t)_{t\ge 0}$ be a $d$-dimensional Brownian motion.

We assume that the dynamics of the state are controlled in the sense that the SDE giving its evolution is of the form
\begin{equation}
\label{fo:state_sde}
    dX_t =  b(t,X_t, \alpha_t) dt + \sigma(t,X_t, \alpha_t) dW_t
\end{equation}
where the control process $\balpha=(\alpha_t)_{0\le t\le T}$ is a progressively measurable (\emph{non-anticipative}) $A$-valued stochastic process, $\alpha_t\in A$ representing the action taken at time $t$ by the agent/controller. We shall make clear later on what we mean by non-anticipative, but intuitively speaking, this means that the controller does not have a crystal ball, and needs to choose an action using only the information available at that time. 

\vskip 2pt
The choice of the control process should be made in order to minimize the expected future discounted cost: 
\begin{equation}
    \label{fo:expected_cost}
    \EE\Bigl[\int_0^T e^{-\beta t} f\bigl(t,X_t, \alpha_t \bigr)\;dt\Bigr],
\end{equation}
where as before, the time horizon $T>0$ could be infinite.
In this setting the value function definition reads:
\begin{equation}
    \label{fo:continuous_value}
    V(t,x)=\inf_{\balpha}\EE\Bigl[\int_t^T e^{-\beta(s-t)} f\bigl(s,X_s,\alpha_s\bigr)\Bigl| X_t=x\Bigr].
\end{equation}
In full analogy with the discrete-time case, control processes are often given in closed loop Markovian form. In other words, the random action $\alpha_t$ at time $t$ is given by the value $\pi_t(X_t)$ of a feedback function $\pi_t:S\mapsto S$, and the set $\bpi=(\pi_t)_{t\ge 0}$ is often called a \emph{policy}.
In continuous time, the appropriate version of the DPP is technically much more difficult to derive rigorously from a mathematical standpoint. In any case it leads to a form of the Bellman equation which is now a nonlinear partial differential equation (PDE) known as the Hamilton-Jacobi-Bellman (HJB) equation.

Existence of the value function is usually understood as the infimum appearing in formula \eqref{fo:continuous_value} is often finite for every couple $(t,x)$. This is certainly not a very restrictive assumption. Indeed, it is obviously satisfied when $f$ is bounded or merely bounded from below. However, the existence of an admissible control process $\balpha$ realizing the minimum in \eqref{fo:continuous_value} is a more challenging problem. This is very unfortunate because in many applications, the knowledge of an optimal control policy is more important than the knowledge of the value function. Existence of such an optimal control process does not hold in general. The Fillipov convexity condition is the most commonly used sufficient condition guaranteeing existence of an optimal control process. Roughly speaking this condition says that for each couple $(t,x)$, the set $\{(b(t,x,a),f(t,x,a));\,a\in A\}$ is convex.
See for example \cite{ElKaroui_et_al} for a proof.

\section{\textbf{Compactifications in the Search for Optimal Controls}}
\label{se:compactifications}

While finding sufficient conditions for existence of the value functions -- typically proving that the infima are finite -- is relatively straightforward, proving that the infima are attained is usually much more difficult.
Knowing that the value function exists and is finite is interesting in itself. However, in most applications, knowing the  of an optimal control process, and understanding its properties is much more important.
Life would be easier if the search for optimal control processes could be limited to compact spaces, and the restrictions of the objective functions (i.e. expected aggregate discounted future costs) to these spaces were to be lower semi-continuous. Unfortunately, this is rarely the case, and more sophisticated tools need to be brought to bear in order to tackle this challenging issue.

\vskip 2pt
The following two subsections present the time-honored approaches to the compactification processes used in the discrete and continuous time cases to transform the original models in such a way that the \emph{wishful thinking} articulated above becomes reality. We may trace their origins far back in the history of the subject.
The most famous example in the discrete-time case is most likely the existence of Nash equilibria in mixed strategies for finite games, \cite{Nash_1951}, while the introduction of Young measures \cite{Young} is among the most cited early instances in the continuous-time case.

\vskip 6pt
The original purpose of this paper was to examine the differences and the similarities between these parallel compactification approaches. Using mixed controls in the discrete time case, and using relaxed controls in continuous time control, we recast these two approaches in a common mold, and hopefully, connect them in a natural way. This procedure should be reminiscent of the standard approximation procedures identifying the equations of continuous models as limits of their analogues after grid discretizations of the time axis.

\vskip 4pt
The remaining subsections are devoted to the discussion of those recastings in the special cases of the optimization of MDPs introduced earlier in Subsection \ref{sub:MDPs}, and of the optimal control of SDEs in Subsection \ref{sub:SDEs}.

\subsection{Controlling MDPs with Mixed Controls}
\label{sub:mixed_mdps}
Consider an MDP as defined earlier, say with transition probability kernel $P $ instead of a system function $F$ for the sake of the present discussion. In such a set-up, a \emph{mixed control}, or a \emph{randomized action}, is a probability distribution over the action space $A$, namely an element $\alpha=\alpha(d a)$ of $\cP(A)$ the space of probability measures on the action space $A$, instead of a plain element $a\in A$. Clearly, any specific action $a\in A$ can be viewed as a randomized action $\alpha=\delta_a$, the Dirac measure concentrated at the point $a\in A$.

\vskip 2pt
Naturally, we call a \emph{mixed control process} (sometimes called a randomized control process) $\balpha=(\alpha_n)_{0\le n\le T}$, any non-anticipative sequence of random variables with values in $\cP(A)$.  As usual, the non-anticipative property simply means that the choice of $\alpha_n$ can only depend upon the information available at time $n$. Given a mixed control process $\balpha=(\alpha_n)_{0\le n\le T}$, the dynamics of the state process $\bX=(X_n)_{0\le n\le T}$ are given exactly as before by a control process $\ba=(a_n)_{0\le n\le T}$ with the following property: at each time $n$, given the value of the state $X_n$ (and possibly its past values as well as the past values of the actions taken), the value of the random action $a_n$ is independently sampled from the distribution $\alpha_n=\alpha_n(da)$. In other words, $X_{n+1}\sim P(X_n,a_n)$ with $a_n\sim \alpha_n$ independently of the past.

\vskip 4pt
A convenient way to construct such a control process $\ba=(a_n)_{0\le n\le T}$ forming conditionally independent samples from the randomized control process $\balpha=(\alpha_n)_{0\le n\le T}$ is to use the Blackwell-Dubins theorem \cite{BlackwellDubins}. The latter states that if $A$ is a Polish space, there exists a Borel measurable function $\rho_A:\cP(A)\times [0,1]\to A$ such that
for any $\mu\in\cP(A)$, the image (i.e. the push-forward) of the Lebesgue measure on the unit interval $[0,1]$ by the function $[0,1]\ni u\mapsto \rho_A(\mu,u)\in A$ is exactly $\mu$. Moreover, for almost every $u\in[0,1]$ the function $\cP(A)\ni\mu\mapsto \rho_A(\mu,u)\in A$ is continuous for the topology of the weak convergence on $\cP(A)$. We shall use such a function $\rho_A$ and call it the Blackwell-Dubins function of the space $A$ for definiteness. Notice that in fact, the result remains true if one replaces the unit interval $[0,1]$ and its Lebesgue's measure by any probability space. 

\vskip 2pt
The Blackwell-Dubins theorem happens to be extremely convenient in our situation because one can easily construct inductively a sequence $(U_n)_{n\ge 0}$ of uniformly distributed random variables in $[0,1]$ in such a way that for each $n\ge 0$, $U_{n+1}$ is independent of all the $U_m$ for $m\le n$ as well as of the past values of the state, namely the $X_m$ for $m\le n$. Indeed, with such a sequence in hand, one can choose:
\begin{equation}
    \label{fo:alpha_n}
    a_n=\rho_A\bigl(\alpha_n,U_{n+1}\bigr).
\end{equation}
This strategy was repeatedly used in \cite{CarmonaLauriere_AAP} in mean field reinforcement learning with common noise.

\vskip 2pt
In the case of (closed loop) Markovian models, the mixed action $\alpha_n$ is of the form $\alpha_n=\pi_n(X_n)$ for some deterministic function $\pi_n:S\ni x\mapsto\pi_n(x)\in\cP(A)$. Even though it is extensively used, the terminology \emph{Markovian} may be some kind of a stretch in this situation, but we shall conform to this usage for the sake of clarity. As earlier, we call the set $\bpi=(\pi_n)_{n\ge 0}$ a \emph{mixed policy}. They are extensively used in RL applications.

\vskip 4pt
We now argue that the randomization provided by the use of mixed controls is in fact a form of "compactification" as hinted at in the title of the section. Indeed, if for example, we restrict ourselves to bounded sets $A$, then $A$ is compact, and so is $\cP(A)$, and continuity of functions with values in $\cP(A)$ equipped with the topology of the weak convergence of measures is a relatively weak requirement.

\subsection{Continuous Time Control with Relaxed Controls}
\label{sub:standard_relaxed}
Typically, a relaxed control is a continuous time adapted process $\balpha=(\alpha_t)_{0\le t\le T}$ with values in $\cP(A)$. In order to emphasize the compactification effect of such a randomization, it is convenient, at least for finite horizon $T$, to view the (random) value of a relaxed control $\balpha$ as a probability measure on the product space $[0,T]\times\RR^d$ whose first marginal (projection) is a multiple of the Lebesgue measure. Indeed, by disintegration of such a measure one recovers $\balpha=(\alpha_t)_{0\le t\le T}$ as $\balpha(dt,d\alpha)=\alpha_t(d a)dt/T$.
Viewing relaxed controls as random elements in the new action space $\tilde A$ of elements of $\cP([0,T]\times A)$ whose first marginal is a multiple of the Lebesgue measure has interesting benefits. Indeed, this space of measures is compact for the topology of the \emph{stable convergence}, and simple criteria exist for the stable continuity of functions defined on this space. See for example \cite{JacodMemin_stable} or \cite{ElKaroui_et_al} for details.

So when using relaxed controls, for almost every time $t\in [0,T]$, a probability measure $\alpha_t(d a)$ plays the role of the choice of an action. This is very much in the spirit of the mixed controls in the optimization of discrete-time
Markov decision processes. However, the fact that time is now continuous does create difficulties and understanding them is one of the motivations of this paper.

\vskip 2pt
In the abundant literature on relaxed control models, \emph{compactification} of the 
dynamics of the controlled state \eqref{fo:state_sde} gives rise to two separate streams of procedures depending on whether or not the control $\alpha_t$ appears in the volatility coefficient $\sigma$. The case of controlled volatility is mathematically more involved, and most relevant to the thrust of this paper, it is not 
amenable to the RL randomization procedures advocated here. We devote Section \ref{se:discussion} to this set of models, but for the time being, we limit ourselves to models for which the volatility is independent of the control, namely for which $\sigma(t,X_t,\alpha_t)=\sigma(t,X_t)$.

For these models, the relaxed control randomization of the dynamics is given by an SDE of the form
\begin{equation}
    \label{fo:relaxed_dynamics}
    d\tilde X_t = \tilde b(t,\tilde X_t, \alpha_t) dt + \tilde\sigma(t,\tilde X_t) dW_t
\end{equation}
where the new coefficients are given by
\begin{equation}
    \label{fo:relaxed_coefs}
    \tilde b(t,x, \alpha)  = \int_A b(t, x,a) \alpha(da) ,
    \quad\text{and}\quad 
    \tilde\sigma(t,x, \alpha) = \sigma(t,x),
\end{equation}
in terms of the coefficients of the original formulation of the control problem.
If we define the new instantaneous cost function $\tilde f$ by
\begin{equation}
    \label{fo:relaxed_cost_function}
    \tilde f(t,x, \alpha)  = \int_A f(t, x,a) \alpha(da) ,
\end{equation}
the new relaxed expected cost reads:
\begin{equation}
    \label{fo:relaxed_cost}
J(\bpi)=    \EE\Bigl[\int_0^T e^{-\beta t} \tilde f\bigl(t, \tilde X_t, \alpha_t\bigr) \;dt\Bigr]=    \EE\Bigl[\int_0^T e^{-\beta t} \int f\bigl(t, \tilde X_t, a \bigr)\alpha_t(da) \;dt\Bigr],
\end{equation}
where as mentioned earlier, the time horizon $T$ could be infinity, i.e. $T=\infty$, and the discount factor could be zero, i.e. $\gamma=1$ or $\beta=0$. So we now face a new stochastic control problem given by the data $(\tilde b, \tilde \sigma,\tilde f)$ for which we denote the value function by $\tilde V$. Interestingly enough, the new coefficients are linear in the new control variable $\alpha$, and given the compactness of the new space of actions $\tilde A$, existence of an optimal relaxed control realizing the infimum defining the value function is not a surprise. See \cite{ElKaroui_et_al} for example. Still, the new coefficients $\tilde b$ and $\tilde \sigma$ given by formulas \eqref{fo:relaxed_coefs} being possibly different from the original $b$ and $\sigma$, the new dynamics are likely to be different from the original dynamics, and one may wonder how the new value function relates to the original one. Now, restricting ourselves to relaxed controls of the form $\alpha_t(da)=\delta_{a_t}(da)$ for admissible control processes $\ba=(a_t)_{0\le t\le T}$, we see that not only do the dynamics coincide in this case, but also the values do coincide as $J(\ba)=J(\balpha)$. Consequently, this implies that $\tilde V(t,x)\le V(t,x)$. What is remarkable is that equality actually holds. This is the major merit of the compactification via relaxed controls. Equality of the values functions is a consequence of a result known as the \emph{chattering lemma} which states that for each relaxed control $\balpha=(\alpha_t)_{0\le t\le T}$, there exists a sequence $(\ba^n)_{n\ge 1}$ of (pure) admissible controls such that the relaxed controls $\balpha^n$ defined by $\alpha^n_t(da)=\delta_{a^n_t}(da)$ approximate $\balpha$ in the sense of the stable topology, and 
$$
\tilde J(\balpha)=\lim_{n\to\infty}\tilde J(\balpha^n)
=\lim_{n\to\infty} J(\ba^n).
$$

\vskip 12pt
Since the practice in reinforcement learning is to use policies in closed loop form, for the sake of presentation, we shall restrict ourselves to relaxed controls in Markovian feedback forms, for which $\alpha_t=\pi_t(X_t)$ for a deterministic function $\pi_t: S\to \cP(A)$, the set $\bpi=(\pi_t)_{0\le t\le T}$ of deterministic functions being called a policy. The chattering lemma being tailor-made for open-loop models, this choice will require ad hoc approximation arguments to connect optimization with relaxed control with the mixed control practice of reinforcement learning.

\vskip 2pt
Notice that as a general rule, we try to use the notation $\bpi=(\pi_t)_t$ for policies (which can also be viewed as feedback functions), $\balpha=(\alpha_t)_t$ for control processes, and $\ba=(a_t)_t$ for the actual action processes.

\section{\textbf{First Attempt at the Randomization of a Continuous Time Model}}
\label{se:after_sun}
As already mentioned in the introduction, a heuristic argument was given in \cite[Section 2.1]{Wang_Z_Z_RL} to try to connect the discrete and continuous time randomizations/relaxations. However, their argument is informal and relies on the use of a large number of independent copies of the model, allowing the authors to appeal to the standard law of large numbers. Our approach is different because, not only do we want to rely on rigorous mathematical arguments and complete proofs, but we also do not want to rely on the unnatural and unrealistic assumption of the availability of a large number of independent copies of the model. Also, in a follow-up paper \cite{JiaZhou}, some of those authors try to appeal to the so-called \emph{exact Law of Large Numbers} for a different justification, but again, this argument falls short of a rigorous rationale, and we debunk it below in Subsection \ref{sub:warming-up}.

\subsection{What Could a Randomization with Mixed Controls Look Like?}
\label{sub:goal}
Here we discuss the natural way to implement the randomization of MDPs with mixed controls presented in Subsection \ref{sub:mixed_mdps} in the case of continuous-time models in hope to recover the formulation of Subsection \ref{sub:standard_relaxed} using relaxed controls.
The generalization of the mixed policy approach to the control of MDPs would be to consider  a continuous time policy $\bpi=(\pi_t)_{0\le t\le T}$ where for each $t\in[0,T]$, $\pi_t:S\to\cP(A)$ would associate to each state of the system $x\in S$, a probability distribution $\pi_t(x)$ over the set of possible actions at time $t$. In full analogy with the discrete time case, such a continuous time policy would lead to a \emph{randomized} state evolution consistent with the dynamics \eqref{fo:state_sde} if one could extract a control process $\ba=(a_t)_{0\le t\le T}$ for which, at each time $t$, $a_t$ would be a random element of $A$ with probability distribution $\alpha_t=\pi_t(X_t)$, independent of the past history of the system, and in particular, independent  of all the $a_s$ for $0\le s<t$.
As a result, $\ba=(a_t)_{0\le t\le T}$ would be a continuous-time white noise, raising highly technical measurability issues whose resolutions, when possible, are rather sophisticated and dramatically reduce the domain of application of the arguments.

\subsection{Warm-Up with a Simple Example}
\label{sub:warming-up}

Here we consider a simple model in which the volatility of the state dynamics is a deterministic constant $\sigma>0$, the set $A$ of possible actions is a closed convex subset of $\RR^d$, and the drift vector field is exactly equal to the control, i.e. $b(t,x,a) = a$. Furthermore, we  limit ourselves to policies and feedback functions which only depend upon time. In this case, the drift vector field of the relaxed control dynamics becomes:
$$
\tilde b(t,x,\alpha)=\int_A a\;\alpha(da)
$$
which is the mean (average) $\bar\alpha$ of the probability measure $\alpha\in\cP(A)$ used as a new control in the new action space $\cP(A)$. In this case, for each relaxed control $\bpi=(\pi_t)_{0\le t\le T}$, the state in the model with relaxed controls reads
\begin{equation}
    \label{fo:tilde_X_t}
\tilde X_t=X_0+\int_0^t\bar\pi_s\; ds+\sigma W_t,
\end{equation}
and according to the above informal discussion, the mixed control approach to such a model should involve a time evolution of the form
\begin{equation}
    \label{fo:X_t}
X_t=X_0+\int_0^t\alpha_s\; ds+\sigma W_t,
\end{equation}
where $\balpha=(\alpha_t)_{0\le t\le T}$ is a measurable process such that for each $t\in[0,T]$, the law of each $\alpha_t$ is $\pi_t$ and the $A$-valued random variable $\alpha_t$ is independent of $(\pi_s)_{0\le s< t}$ and $(\alpha_s)_{0\le s<t}$. 

\vskip 6pt
It is well known that the measurability of $t\mapsto \alpha_t$ and the white noise property of independence of the $\alpha_t$'s among themselves are two incompatible requirements. The closest we can get to satisfying them simultaneously is to work with so-called Fubini extensions, see e.g. \cite{Sun,CarmonaDelarue_book_I,carmona2022stochastic,aurell2022stochastic,aurell2022finite}.
Still, because those concepts were informally mentioned in the discussion of \cite{JiaZhou}, and despite the fact that our conclusions will fall short of the goals we set up for ourselves, 
we review the mathematical definitions and the main steps of a possible resolution of this measurability dilemma. However, as mentioned at the top of this subsection, 
we need to assume that the relaxed control processes $\bpi=(\pi_t)_{0\le t\le T}$ are deterministic for us to be able to use the results of the Fubini's extensions theory.
Here, for the sake of completeness, we recall the definitions and the results we shall use in the remainder of this subsection.
\begin{definition}
\label{de:essentially_iid}
The $A$-valued random variables $(\alpha_s)_{0\le s\le T}$ are said to be essentially pairwise independent if, for almost every $s\in[0,T]$, the random variable $\alpha_s$ is independent of $\alpha_t$ for almost every $t\in[0,T]$. 
\end{definition}

One may wonder if  weakening the full independence requirement to essentially pairwise independence is enough for the construction of such processes on a given probability space $(\Omega,\cF,\PP)$ so that the process $\balpha:[0,T]\times\Omega\ni(t,\omega)\mapsto \alpha_t(\omega)$ satisfies relevant measurability properties. The answer is yes if one is willing to construct such processes on specific extensions of the product space $([0,T]\times \Omega,\cB_{[0,T]}\otimes\cF,\lambda\otimes\PP)$ where $\lambda$ stands for the Lebesgue measure. Such extensions are called Fubini's extensions. In fact, some of these extensions are \emph{rich} enough to allow for the construction of essentially pairwise independent processes with arbitrary chosen distributions, leading to the following statement borrowed from \cite{Sun}.
 
\begin{proposition}
\label{pr:rich_white_noise}
For $\;\Omega=C([0,T];\RR^d)$ equipped with its Borel $\sigma$-field $\cF$ and Wiener measure $\PP$,  there exists a Fubini extension $([0,T]\times\Omega,\cB_{[0,T]}\boxtimes\cF,\lambda\boxtimes\PP)$ with the Lebesgue measure $\lambda$ such that,
$A$ being a Polish space, for any deterministic measurable function $\bpi:[0,T]\to\cP(A)$,  there exists a $\cI\boxtimes\cF$-measurable $A$-valued essentially pairwise independent process $\balpha:[0,T]\times\Omega\to A$ such that for almost every $t\in [0,T]$, the law of $\alpha_t$ is $\pi_t$.
\end{proposition}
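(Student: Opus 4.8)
The plan is to split the construction into two clearly separated parts: first, securing a \emph{rich} Fubini extension over the fixed sample space $(\Omega,\cF,\PP)$, and second, transporting a single universal generator onto the prescribed family of marginals $(\pi_t)_{0\le t\le T}$ by composing with the Blackwell--Dubins function $\rho_A$. The reason for this split is that essentially all of the difficulty is concentrated in the first part; once a rich extension is in hand, the second part reduces to a routine measurable composition.

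First I would recall the obstruction that forces the notion of a Fubini extension in the first place. On the ordinary product $([0,T]\times\Omega,\cB_{[0,T]}\otimes\cF,\lambda\otimes\PP)$, a jointly measurable process whose sections are (essentially) pairwise independent must be $\lambda\otimes\PP$-almost everywhere constant in $\omega$; this is the classical joint-measurability obstruction going back to Doob, and it is exactly the tension between measurability and independence flagged in Subsection~\ref{sub:warming-up}. Hence no nontrivial white-noise process can live on the product itself, and the only way forward is to enlarge $\cB_{[0,T]}\otimes\cF$ to a strictly larger $\sigma$-field $\cI\boxtimes\cF$ carrying an extension $\lambda\boxtimes\PP$ of $\lambda\otimes\PP$ that still satisfies the Fubini identity, namely that both iterated integrals exist and agree for every $\lambda\boxtimes\PP$-integrable function, with $\Omega$-marginal still equal to $\PP$ on $\cF$.

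The crux, and the step I expect to be the main obstacle, is the existence of such an extension that is moreover \emph{rich}: it must support at least one $\cI\boxtimes\cF$-measurable process $g:[0,T]\times\Omega\to[0,1]$ whose sections $(g_t)_{0\le t\le T}$ are essentially pairwise independent and uniformly distributed on $[0,1]$. This is precisely the content I would borrow from Sun~\cite{Sun}: via a hyperfinite Loeb-product construction, Keisler's Fubini theorem for Loeb products supplies the Fubini property, while a hyperfinite family of mutually independent internal random variables supplies the richness, yielding a rich Fubini extension whose $\Omega$-marginal coincides with the Wiener measure $\PP$. I would stress that this is where the plain product structure genuinely has to be abandoned and where all the measure-theoretic subtlety (in particular the need for the sample space to be rich enough) resides; I would treat it as an external black box rather than reprove it.

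Granting the rich extension, the construction becomes immediate. Given a Borel measurable $\bpi:[0,T]\to\cP(A)$, set
\[
\alpha_t(\omega)=\rho_A\bigl(\pi_t,\, g(t,\omega)\bigr),\qquad (t,\omega)\in[0,T]\times\Omega .
\]
Joint $\cI\boxtimes\cF$-measurability holds because $(t,\omega)\mapsto(\pi_t,g(t,\omega))$ is $\cI\boxtimes\cF$-measurable into $\cP(A)\times[0,1]$ and $\rho_A$ is Borel. For almost every $t$, since $g_t$ is uniform on $[0,1]$, the defining push-forward property of the Blackwell--Dubins function gives that the law of $\alpha_t$ is exactly $\pi_t$. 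Finally, essential pairwise independence of $(\alpha_t)_t$ is inherited from that of $(g_t)_t$: for almost every $s$ one has $g_s\independent g_t$ for almost every $t$, and then $\alpha_s=\rho_A(\pi_s,g_s)$ and $\alpha_t=\rho_A(\pi_t,g_t)$ are independent as Borel functions of independent variables, the quantifier structure of Definition~\ref{de:essentially_iid} being preserved. This last point is exactly where the deterministic hypothesis on $\bpi$ is used and cannot be dropped: if $\pi_t$ depended on the realized state $X_t$, its randomness would be coupled to the past actions through the dynamics, so composing with $g_t$ would no longer decouple $\alpha_t$ from $(\alpha_s)_{0\le s<t}$, and the construction would break.
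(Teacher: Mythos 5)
Your proposal is correct as far as it goes, and it should be said up front that the paper contains no proof to compare against: Proposition \ref{pr:rich_white_noise} is imported verbatim from \cite{Sun}, arbitrary marginals included. Your route differs only in its decomposition: you borrow from \cite{Sun} just the existence of a \emph{rich} Fubini extension (one carrying a single $\cI\boxtimes\cF$-measurable, essentially pairwise independent process $g$ with uniform marginals) and then realize the prescribed marginals via $\alpha_t(\omega)=\rho_A\bigl(\pi_t,g(t,\omega)\bigr)$. That reduction is sound: $(t,\omega)\mapsto\bigl(\pi_t,g(t,\omega)\bigr)$ is $\cI\boxtimes\cF$-measurable into the Polish space $\cP(A)\times[0,1]$ and $\rho_A$ is Borel; for almost every $t$ the push-forward property of $\rho_A$ gives $\alpha_t\sim\pi_t$; and, because $\bpi$ is deterministic, $\alpha_s$ and $\alpha_t$ are Borel functions of the independent variables $g_s$ and $g_t$, so essential pairwise independence is inherited with the same exceptional null sets, exactly as Definition \ref{de:essentially_iid} requires. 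This is in fact how the literature itself passes from richness to arbitrary distributions, and it has the merit of reusing the Blackwell--Dubins function of Subsection \ref{sub:mixed_mdps}, making the construction the exact continuous-time analogue of the discrete sampling \eqref{fo:alpha_n}. The one soft spot is your assertion that the Loeb-product construction yields a rich Fubini extension ``whose $\Omega$-marginal coincides with the Wiener measure $\PP$'': Sun's hyperfinite construction produces Loeb factor spaces, not the Wiener space, and anchoring the sample factor at a prescribed $(\Omega,\cF,\PP)$ is a genuinely separate step (a transfer through a measure isomorphism with the unit interval, in the spirit of Sun--Zhang, or an appeal to a saturation criterion \`a la Podczeck). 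Moreover, that step necessarily enlarges the $\sigma$-field on $\Omega$ itself, not merely the product $\sigma$-field: if the sections $g_t$ were measurable with respect to the $\PP$-completion of the countably generated Borel $\sigma$-field $\cF$, separability of $L^2(\Omega,\cF,\PP)$ would contradict essential pairwise independence of the uniform variables $g_t$ (the vectors $e^{2\pi i g_t}$ are unit, mean-zero, and orthogonal for almost every pair, and a covering argument over a countable dense subset of the unit sphere rules this out). This caveat, however, is one you share with the statement of Proposition \ref{pr:rich_white_noise} itself and with Remark \ref{re:fubini_extension}, which acknowledges only the enlargement of the product $\sigma$-field, so your proof stands at parity with, not behind, the paper.
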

So for each deterministic relaxed control $\bpi=(\pi_t)_{0\le t\le T}$ the state dynamics controlled by such a relaxed control, as given in \eqref{fo:tilde_X_t}, can be 
implemented via a procedure inspired by the independent sampling of mixed controls in the discrete time case, using an $A$-valued measurable controlled process $\balpha=(\alpha_t)_{0\le t\le T}$ controlling state dynamics \eqref{fo:X_t}. Finally, the fact that the stochastic dynamics \eqref{fo:tilde_X_t} can be identified to \eqref{fo:X_t} is a consequence of the \emph{exact law of large numbers} which, in the present set up, states that 
$$
    \int_0^t\alpha_s ds= \int_0^t\EE[\alpha_s]\; ds=\int_0^t\bar\pi_s\;ds.
$$

\begin{remark}[Important Remark]
\label{re:fubini_extension}
    Before closing this subsection, we want to emphasize that the results it contains should only be viewed as an \emph{expedient} given the stated objectives of our analysis. Indeed, even for the special case of deterministic relaxed controls, they fall short of what we were hoping for as outlined in Subsection \ref{sub:goal}. First, the measurability of the process $\balpha=(\alpha_t)_{0\le t\le T}$ only holds for the $\sigma$-field $\cB_{[0,T]}\boxtimes\cF$ which is an (obscure) enlargement of the product $\sigma$-field $\cB_{[0,T]}\otimes\cF$. Second, $\alpha_t$ is guaranteed to have the desired distribution $\pi_t$, only for almost every $t\in[0,T]$. Third, the $\alpha_t$'s do not form a white noise since they are not really independent, but only \emph{essentially pairwise independent}, and this is a serious issue. And fourth, we do not know if the random variables $\alpha_t$ are independent of the past history $X_{[0,t]}$ of the state. 
    
    Finally, the existence of the process $\balpha=(\alpha_t)_{0\le t\le T}$ is merely the result of an abstract mathematical construct, and as far as we know, there is  no constructive proof of this existence.
\end{remark}

\section{\textbf{Continuous Time Relaxed Models as Limits of Mixed MDP Models}}
\label{se:final}

This section contains the main technical result of the paper. We show that the state process controlled by a feedback relaxed policy in continuous time is the limit of a sequence of discrete-time state processes controlled at the times of a finite grid, by mixed controls sampled from the time discretization of the original relaxed control policy. It is important to emphasize that our approximation result is in the category of \emph{strong approximation} results in the sense that it is at the level of the trajectories. This is of the utmost importance for practical implementations, and it is one of the reasons why we restrict ourselves to It\^o stochastic differential equations in which the volatility is not controlled.

Not surprisingly, we follow the time-honored method of approximating continuous-time stochastic models by discretizations of the time axis. After all, when one is numerically solving a continuous-time stochastic control problem, whether the implementation uses the knowledge of the model, or is \emph{model-free} and based on Reinforcement Learning algorithms, the first step is always to discretize time. So starting from a relaxed control policy in Markovian feedback form, we use the Euler-Maruyama scheme to reset the model on a finite time grid.
In order to guarantee the convergence of the Euler-Maruyama approximation, we assume that the drift vector field $b$ and the volatility matrix $\sigma$ are $1/2$-H\"{o}lder continuous in time and Lipschitz continuous in the state and action variables. Also, we  restrict ourselves to relaxed controls with the same regularity properties. In fact, the statement and the proof below are given for time independent coefficients for the sake of simplicity.

The convergence of the state trajectories being uniform in time, we derive as a by-product, the convergence of the corresponding expected aggregated costs.

\vskip 6pt
In the following proof, we use the following simple discrete version of Gronwall's inequality
\begin{equation}
    \label{fo:Gronwall}
u_j\le \alpha +\sum_{h=0}^{j-1}\beta_h u_h\qquad\Longrightarrow\qquad
u_j\le \alpha\exp\Bigl(\sum_{h=0}^{j-1}\beta_h\Bigr).
\end{equation}
For the sake of definiteness, we consider only a model in finite time horizon $T$. 

\begin{theorem}
\label{thm:main-X}
Assume that $b$ and the policy $\pi$ do not depend on time, and $\sigma$ does not depend on the action. Assume that $b$ is bounded, and that $b$ and $\pi$ are Lipschitz continuous. Consider the \emph{relaxed state}:
\begin{equation}
    \label{fo:continuous_dynamics}
          X_t = X_0 +\int_0^t\tilde b\bigl( X_s,\pi(X_s)\bigr)ds+ \int_0^t\tilde \sigma\bigl( X_s\bigr) dW_s
\end{equation}
where the relaxed coefficients are given by:
\begin{equation}
    \label{fo:relaxed_coefs}
\tilde b(x,\pi)=\int_A b(x,a)\pi(x)(da)\qquad\text{and}\qquad \tilde \sigma(x)=\sigma(x).   
\end{equation}

For each positive integer $N$, consider the time discretization grid $\{t_n\}_{n=0,1,\cdots,N}$
with $t_n=nT/N$. 
We consider the following dynamics for the process with \emph{randomized actions}: 
\begin{equation}
\label{fo:X_hat_MN}
    \hat X_{t_n}
    =X_0+\frac{T}{N}\sum_{m=0}^{n-1} b\bigl(\hat X_{t_m},\hat a_m \bigr)
    +\sqrt{\frac{T}{N}}\sum_{m=0}^{n-1}\sigma\bigl(\hat X_{t_m}\bigr)\;\epsilon_{m+1},
\end{equation}
where $\epsilon_{m+1}=\sqrt{N/T} [W_{t_{m+1}}-W_{t_m}]$ and each random variables $\hat a_m$ is independent of the past sources of randomness up to (and including) time $t_m$ and $\hat a_m\sim \pi\bigl(\hat X_{t_m}\bigr)$, i.e., is distributed according to the probability distribution $\pi\bigl(\hat X_{t_m}\bigr)$. Then,
    \[
    \EE\left[\max_{0\le n\le N}| X_{t_n} - \hat X_{t_n} |^2 \right] ^{1/2}
    \le\frac{C}{\sqrt{N}} ,
\]
where the constant $C$ depends on $T$ and the data of the problem, including the policy $\pi$. %
\end{theorem}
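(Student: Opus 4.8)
The plan is to treat \eqref{fo:X_hat_MN} as a perturbed Euler--Maruyama scheme for \eqref{fo:continuous_dynamics} and to run a discrete Gronwall argument on the grid error $e_n := X_{t_n}-\hat X_{t_n}$, isolating the genuinely new term coming from the independent sampling of the actions $\hat a_m$. Writing the continuous state over a grid interval as $X_{t_{m+1}}-X_{t_m}=\int_{t_m}^{t_{m+1}}\tilde b(X_s,\pi(X_s))\,ds+\int_{t_m}^{t_{m+1}}\sigma(X_s)\,dW_s$ and subtracting one step of \eqref{fo:X_hat_MN}, I would decompose the one-step error into five pieces: a drift time-regularity term $R_m=\int_{t_m}^{t_{m+1}}[\tilde b(X_s,\pi(X_s))-\tilde b(X_{t_m},\pi(X_{t_m}))]\,ds$; a drift stability term $A_m=\frac{T}{N}[\tilde b(X_{t_m},\pi(X_{t_m}))-\tilde b(\hat X_{t_m},\pi(\hat X_{t_m}))]$; the sampling term $D_m=\frac{T}{N}[\tilde b(\hat X_{t_m},\pi(\hat X_{t_m}))-b(\hat X_{t_m},\hat a_m)]$; a diffusion time-regularity term $Q_m=\int_{t_m}^{t_{m+1}}[\sigma(X_s)-\sigma(X_{t_m})]\,dW_s$; and a diffusion stability term $B_m=[\sigma(X_{t_m})-\sigma(\hat X_{t_m})][W_{t_{m+1}}-W_{t_m}]$ (using $\tilde\sigma=\sigma$).

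The key observation is that $D_m$, $Q_m$, and $B_m$ are martingale differences with respect to the filtration $\cF_{t_m}$ generated by the Brownian increments and the sampled actions up to time $t_m$. For $D_m$ this is precisely the content of $\hat a_m\sim\pi(\hat X_{t_m})$: conditionally on $\cF_{t_m}$ one has $\EE[b(\hat X_{t_m},\hat a_m)\mid\cF_{t_m}]=\int_A b(\hat X_{t_m},a)\,\pi(\hat X_{t_m})(da)=\tilde b(\hat X_{t_m},\pi(\hat X_{t_m}))$, so $\EE[D_m\mid\cF_{t_m}]=0$. Boundedness of $b$ then yields the crucial magnitude bound $|D_m|\le \frac{T}{N}\,2\|b\|_\infty$, so that $\sum_{m<N}\EE|D_m|^2\le 4T^2\|b\|_\infty^2/N$: this is exactly the $O(1/N)$ contribution (hence rate $O(1/\sqrt N)$) from independent action sampling, and it is the point at which the hypothesis that $b$ is bounded is indispensable. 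The remaining source terms $R_m$ and $Q_m$ are the standard discretization errors, controlled by the elementary one-step regularity estimate $\EE|X_s-X_{t_m}|^2\le C(s-t_m)$ for $s\in[t_m,t_{m+1}]$, which I would establish first from boundedness of $b$ and the Lipschitz/linear-growth of $\sigma$ via Burkholder--Davis--Gundy and a moment bound on $\sup_t|X_t|^2$; each contributes $O(1/N)$ to the relevant sum. The two stability terms $A_m$ and $B_m$ are the feedback terms: using that $x\mapsto\tilde b(x,\pi(x))$ is Lipschitz (which follows from Lipschitz continuity of $b$ in $(x,a)$ and of $\pi$ in the Wasserstein sense) and Lipschitz continuity of $\sigma$, they are bounded by $\frac{T}{N}L|e_m|$ and a martingale difference of conditional size $L|e_m|\,[W_{t_{m+1}}-W_{t_m}]$ respectively.

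From here I would close the estimate in two stages. First, splitting $e_n$ into its martingale part $\sum_{m<n}(D_m+Q_m+B_m)$ and its finite-variation part $\sum_{m<n}(R_m+A_m)$, orthogonality of martingale differences together with the Cauchy--Schwarz bound $(\sum_{m<n}|A_m|)^2\le N\sum_{m<n}|A_m|^2$ gives a causal recursion of the form $\EE|e_n|^2\le \frac{C}{N}+\frac{C}{N}\sum_{m<n}\EE|e_m|^2$, to which the discrete Gronwall inequality \eqref{fo:Gronwall} applies with $\alpha=C/N$ and $\beta_h=C/N$, yielding $\max_n\EE|e_n|^2\le C/N$. Second, to promote this to the uniform-in-time bound $\EE[\max_n|e_n|^2]$, I would apply Doob's $L^2$ maximal inequality to the martingale part and the trivial bound $\max_n|\sum_{m<n}(R_m+A_m)|\le\sum_{m<N}(|R_m|+|A_m|)$ to the finite-variation part, in each case feeding back the already-proven pointwise bound $\EE|e_m|^2\le C/N$; this gives $\EE[\max_{0\le n\le N}|e_n|^2]\le C/N$ and hence the claim. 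The convergence of the expected costs follows because the uniform-in-time $L^2$ control of the trajectories, together with the linear-in-$\alpha$ and Lipschitz structure of $\tilde f$, passes directly through the cost functional.

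The step I expect to be the main obstacle is not any single estimate but getting the probabilistic bookkeeping around $D_m$ exactly right: one must fix a filtration in which $\hat X_{t_m}$ is measurable while $\hat a_m$ injects fresh randomness with the prescribed conditional law $\pi(\hat X_{t_m})$, verify that $D_m$, $Q_m$, $B_m$ are jointly martingale differences so that cross terms vanish, and confirm that the sampling error contributes at the same $O(1/\sqrt N)$ order as the classical discretization error rather than degrading it --- which is exactly where boundedness of $b$, as opposed to mere linear growth, is used. By contrast, when the volatility is controlled the analogous sampling term enters the diffusion coefficient and its conditional variance is only $O(T/N)$ per step with no cancellation, which is the mechanism behind the sub-linear rate discussed in Sections~\ref{se:numeric} and~\ref{se:discussion}.
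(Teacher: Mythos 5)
Your proposal is correct and is essentially the paper's own proof: your sampling term $D_m$ is exactly the paper's martingale term (iii), treated identically via the conditional-mean identity $\EE[b(\hat X_{t_m},\hat a_m)\mid\cF_{t_m}]=\tilde b(\hat X_{t_m},\pi(\hat X_{t_m}))$, the boundedness of $b$, Doob's $L^2$ maximal inequality, and a discrete Gronwall recursion, while your stability terms $A_m$ and $B_m$ play the role of the paper's terms (i)--(ii) and (iv). The only organizational difference is that the paper first introduces the Euler--Maruyama discretization $\tilde X$ of the relaxed SDE \eqref{fo:continuous_dynamics} and invokes its standard strong rate $O(N^{-1/2})$ as a black box, then compares $\tilde X$ with $\hat X$, whereas you fold the corresponding time-regularity errors $R_m$ and $Q_m$ directly into a single one-pass recursion on $X_{t_n}-\hat X_{t_n}$ --- equivalent content, merged into one Gronwall argument.
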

As mentioned earlier, the dynamics \eqref{fo:X_hat_MN} are motivated by the mixed control implementation of RL algorithms.

\begin{remark}
    The proof given below for time independent coefficients $b$ and $\sigma$ and time independent policies $\pi$ works as well in the time dependent case as long as the Lipschitz assumptions are uniform in time and the time dependence is H\"older of order $1/2$.
Also the result on the control of the quadratic norm can be extended to other powers $p\ne 2$ by using the Burkholder-Davis-Gundy inequality instead of Doob's inequality.
\end{remark}

\begin{proof}[Proof of Theorem~\ref{thm:main-X}]

\vskip 6pt\noindent\textbf{Step 1: Preliminaries. }
We fix an integer $N\ge 1$. The standard way to approximate the solution of an SDE like \eqref{fo:continuous_dynamics} by the solution of a finite difference equation over the finite time grid $\{t_n\}_{n=0,1,\cdots,N}$ is to use the Euler-Maruyama approximation.
So we consider the process $\tilde \bX=(\tilde X_t)_{0\le t\le T}$ defined by:
$$
    \tilde X_t=X_0+\int_0^t\tilde b\bigl(\tilde X_{t_{n(s)}},\pi(\tilde X_{t_{n(s)}})\bigr) ds + \int_0^t\tilde \sigma\bigl(\tilde X_{t_{n(s)}}\bigr) dW_s,
$$
where  $t_{n(s)}=t_n$ if $t_n\le s < t_{n+1}$. The approximating process is a continuous time process, given by a form of interpolation of
the sequence $\bY=(Y_n)_{0\le n\le N}$ of random states defined recursively by $Y_0=X_0$ and the equation
$$
    Y_{n+1}=Y_n+\frac{T}{N} \tilde b\bigl(Y_n,\pi(Y_n)\bigr) + \tilde \sigma\bigl(Y_n\bigr)[W_{t_{n+1}}-W_{t_n}],\qquad n=0,1,\cdots,N-1,
$$
giving proxies for the values of the approximating process at the points of the subdivision given by the times $t_n$.
Indeed, for $0\le n\le N-1$ we have
\begin{equation}
\label{eq:relaxed-discrete-time}
    \begin{split}
    \tilde X_{t_{n+1}}&=\tilde X_{t_n}+\int_{t_n}^{t_{n+1}}\tilde b\bigl(\tilde X_{t_n},\pi(\tilde X_{t_n})\bigr) ds + \int_{t_n}^{t_{n+1}}\tilde \sigma\bigl(\tilde X_{t_n}\bigr) dW_s
    \\
    &=\tilde X_{t_n}+\frac{T}{N}\tilde b\bigl(\tilde X_{t_n},\pi(\tilde X_{t_n})\bigr)  + \tilde \sigma\bigl(\tilde X_{t_n}\bigr) [W_{t_{n+1}}-W_{t_n}],
\end{split}
\end{equation}
which coincides with $Y_{n+1}$ as defined above.

It is well known that if the drift vector field $x\mapsto \tilde b\bigl(x,\pi(x)\bigr)$ and the volatility matrix $x \mapsto \tilde \sigma(x)$
are Lipschitz continuous, then the Euler-Maruyama scheme has a strong rate of convergence of order $1/2$, in the sense that for any $p > 0$, there exists a constant $C_p > 0$ such that
\begin{equation}
\label{eq:rate-relaxed-time-discretization}
\EE\bigl[ \sup_{0\le t\le T}|X_t-\tilde X_t|^p\bigr]\le \frac{C_p}{N^{-p/2}}.
\end{equation}

As defined above, using the notation $\epsilon_{n+1}=\sqrt{N/T} [W_{t_{n+1}}-W_{t_n}]$,
\begin{equation}
\label{fo:X_tilde_n}
    \tilde X_{t_n}
    =\tilde X_0+\frac{T}{N}\sum_{m=0}^{n-1}\int_A b\bigl(\tilde X_{t_m},a\bigr)\pi(\tilde X_{t_m})(da)
    +\sqrt{\frac{T}{N}}\sum_{m=0}^{n-1}\sigma\bigl(\tilde X_{t_m}\bigr)\;\epsilon_{m+1}.
\end{equation}
However, as emphasized throughout the paper, as well as in the statement of the theorem, this is not the practical RL implementation of the state dynamics.
The dynamics relevant to the mixed control based approach to RL are:
\begin{equation}
\label{fo:X_hat_n}
    \hat X_{t_n}
    =\hat X_0+\frac{T}{N}\sum_{m=0}^{n-1} b\bigl(\hat X_{t_m},\hat a_m \bigr)
    +\sqrt{\frac{T}{N}}\sum_{m=0}^{n-1}\sigma\bigl(\tilde X_{t_m}\bigr)\;\epsilon_{m+1},
\end{equation}
where each random variable $\hat a_m$ is independent of the past up to (and including) time $t_m$, and $\hat a_m\sim \pi\bigl(\hat X_{t_m}\bigr)$, i.e. $\hat a_m$ is distributed according to the probability distribution $\pi\bigl(\hat X_{t_m}\bigr)$.

\vskip 12pt\noindent
\textbf{Step 2: Comparison of $\hat \bX$ and $\tilde \bX$. }

From the definitions \eqref{fo:X_hat_n} and \eqref{fo:X_tilde_n} of the processes we get
\begin{equation}
\label{fo:dif0}
\begin{split}
    \bigl|\tilde X_{t_n}- \hat X_{t_n}\bigr|^2
    &\le \frac{4T^2}{N^2} \Bigl|\sum_{m=0}^{n-1}\int_A [b\bigl(\tilde X_{t_m}, a\bigr)-b\bigl(\hat X_{t_m}, a\bigr)]\pi\bigl(\tilde X_{t_m}\bigr)(da)\Bigr|^2\\
    &\hskip 45pt
    +\frac{4T^2}{N^2} \Bigl|\sum_{m=0}^{n-1}\int_A b\bigl(\hat X_{t_m}, a\bigr)[\pi\bigl(\tilde X_{t_m}\bigr)-\pi\bigl(\hat X_{t_m}\bigr)](da)\Bigr|^2 \\
    &\hskip 45pt
    +\frac{4T^2}{N^2} \Bigl|\sum_{m=0}^{n-1}\Bigl(\int_A b\bigl(\hat X_{t_m}, a\bigr)\pi\bigl(\hat X_{t_m}\bigr)(da)-b\bigl(\hat X_{t_m},\hat a_m\bigr)\Bigr)\Bigr|^2\\
    &\hskip 45pt
    +\frac{4T}{N} \Bigl|\sum_{m=0}^{n-1}\Bigl(\sigma(\tilde X_{t_m})-\sigma(\hat X_{t_m})\Bigr)\;\epsilon_{m+1}\Bigr|^2\\
    &\le (i) + (ii) + (iii)+(iv).
    \end{split}
\end{equation}
Notice that
\begin{equation}
\label{fo:(i)}
\begin{split}
    (i)
    &\le \frac{4T^2}{N^2}n\sum_{m=0}^{n-1}\Bigl|\int_A [b\bigl(\tilde X_{t_m}, a\bigr)-b\bigl(\hat X_{t_m}, a\bigr)]\pi\bigl(\tilde X_{t_m}\bigr)(da)\Bigr|^2\\
    &\le \frac{4T^2\|b\|^2_{Lip}}{N}\sum_{m=0}^{n-1}\bigl|\tilde X_{t_m}-\hat X_{t_m}\bigr|^2
\end{split}
\end{equation}
and similarly
\begin{equation}
\label{fo:(ii)}
    (ii)\le \frac{4T^2\|b\|^2_\infty \|\pi\|^2_{Lip}}{N}\sum_{m=0}^{n-1}\bigl|\tilde X_{t_m}-\hat X_{t_m}\bigr|^2
\end{equation}
because of the Lipschitz assumption on $b$ and $\pi$, and the boundedness of $b$.
For the sake of definiteness, for each $t\in[0,T]$ we denote by $\cF_t$ the $\sigma$-field generated by all the sources of randomness prior to (and including) time $t$. Now, in order to control $(iii)$, we introduce a martingale $(Z_n)_n$ to which we shall apply Doob's inequality. For each $n\in\{0,1,\cdots,N-1\}$ we define
$$
    d_n=b\bigl(\hat X_{t_n},\hat a_n\bigr) - \int_A b\bigl(\hat X_{t_n}, a\bigr)\pi\bigl(\hat X_{t_n}\bigr)(da),
    \qquad\text{and}\qquad
    Z_n=\sum_{m=0}^{n-1} d_m, \quad n=1,\cdots,N.
$$
With this notation:
\begin{equation}
\label{fo:(iii)}
    (iii)=\frac{4T^2}{N^2}\bigl|Z_n\bigr|^2.
\end{equation}
$(Z_n)_{1\le n\le N}$ is indeed a $(\cF_{t_n})_{1\le n\le N}$ martingale since:
$$
    \EE[Z_{n+1}|\cF_{t_n}]=Z_n + \EE[d_n|\cF_{t_n}]=Z_n
$$
because $\hat a_n$ is independent of $\cF_{t_n}$ and conditioned on $\cF_{t_n}$, it is distributed according to $\pi(\hat X_{t_n})$. Using Doob's inequality we get 
\begin{equation}
    \label{fo:Doob}
    \EE\Bigl[\sup_{1\le n\le N}\bigl|Z_n\bigr|^2]\le 4 \EE[|Z_N|^2]=\EE[\sum_{n=1}^N|d_n|^2]\le 16 N \|b\|_\infty^2.
\end{equation}
Now, if we set 
$$
    V_n=\sum_{m=0}^{n-1}\Bigl(\sigma(\tilde X_{t_m})-\sigma(\hat X_{t_m})\Bigr)\;\epsilon_{m+1},\qquad n\ge 1,
$$
then 
$\bV=(V_n)_n$ is a martingale because $\epsilon_{m+1}$ is an increment of the Brownian motion into the future at time $t_m$. Now
\begin{equation}
    \label{fo:(iv)}
    (iv)= \frac{4T}{N}|V_n|^2\le \frac{4T}{N}\sup_{1\le m\le n}|V_m|^2
\end{equation}
and denoting by $[V]_n$ the quadratic variation of the discrete time martingale $\bV=(V_n)_n$, Doob's inequality gives:
\begin{equation}
    \label{fo:Doob2}
\begin{split}
    \EE\bigl[\sup_{1\le m\le n}|V_m|^2\bigr]
    &\le 4\EE\bigl[[V]_n^2\bigr]\\
    &\le 4 \EE\Bigl[\sum_{m=0}^{n-1}\text{trace}\bigl[\bigl(\sigma(\tilde X_{t_m})-\sigma(\hat X_{t_m})\bigr)^t\bigl(\sigma(\tilde X_{t_m})-\sigma(\hat X_{t_m})\bigr)\bigr]\Bigr]\\
    &\le 4\|\sigma\|_{Lip}^2\sum_{m=0}^{n-1}\EE[|\tilde X_{t_m}-\hat X_{t_m}|^2].
\end{split}
\end{equation}
From now on, in order to simplify the notation, we use the notation $c$ for a positive constant which can change from line to line, but which only depends upon $T$, $\|b\|_\infty$, $\|b\|_{Lip}$ and $\sigma\|_{Lip}$, and most importantly, which is independent of $N$.
Coming back to \eqref{fo:dif0} and using \eqref{fo:(i)}, \eqref{fo:(ii)}, \eqref{fo:(iii)} and \eqref{fo:(iv)} we get:
\begin{equation}
\label{fo:almost_there}
    \begin{split}        
\sup_{0\le \ell\le n}\bigl|\tilde X_{t_\ell}- \hat X_{t_\ell}\bigr|^2
&\le \frac{c}{N} \sum_{m=0}^{n-1}\sup_{0\le \ell\le m}\bigl|\tilde X_{t_\ell}-\hat X_{t_\ell}\bigr|^2+\frac{c}{N^2}\sup_{1\le n\le N}|Z_n|^2 + \frac{c}{N}\sup_{1\le m\le n}|V_m|^2
    \end{split}
\end{equation}
Taking expectations of both sides and using \eqref{fo:Doob} and \eqref{fo:Doob2} to control the two right most terms, we get:
\begin{equation}
\label{fo:pre_Gronwall}
    \begin{split}        
\EE\Bigl[\sup_{0\le \ell\le n}\bigl|\tilde X_{t_\ell}- \hat X_{t_\ell}\bigr|^2\Bigr]
&\le \frac{c}{N} \sum_{m=0}^{n-1}\EE\Bigl[\sup_{0\le \ell\le m}\bigl|\tilde X_{t_\ell}-\hat X_{t_\ell}\bigr|^2\Bigr]
+\frac{c}{N} + \frac{c}{N}\sum_{m=0}^{n-1}\EE\Bigl[\sup_{0\le \ell\le m}|\tilde X_{t_\ell}- \hat X_{t_\ell}\bigr|^2\Bigr].
    \end{split}
\end{equation}
Using Gronwall's inequality we get:
\begin{equation}
\label{fo:final}       
    \EE\Bigl[\sup_{0\le \ell\le N}\bigl|\tilde X_{t_\ell}- \hat X_{t_\ell}\bigr|^2\Bigr]
    \le \frac{c}{N},
\end{equation}
which is the desired result
\end{proof}

The above strong approximation result has the following consequence for the convergence of the expected costs.

\begin{corollary}
    Under the above assumptions, if the running and terminal cost functions $f$ and $g$ are uniformly continuous, then we have:
$$
\lim_{N\to\infty}\EE\Bigl[g(\hat X_{t_N})+\frac{T}{N}\sum_{n=0}^{N-1}f(t_n,\hat X_{t_n},\hat a_n)\Bigr]
=\EE\Bigl[g( X_{T})+\int_0^T f(t, X_t,\pi_t(X_t))dt\Bigr].
$$
\end{corollary}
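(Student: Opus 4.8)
The plan is to split the aggregate cost into its terminal and running parts and feed both the strong $L^2$ estimate of Theorem~\ref{thm:main-X}, namely $\EE[\max_{0\le n\le N}|X_{t_n}-\hat X_{t_n}|^2]^{1/2}\le C/\sqrt N$. The single device that makes the \emph{merely} uniformly continuous costs tractable is to replace Lipschitz bounds by a \emph{concave} modulus of continuity: for uniformly continuous $h$ one may choose an increasing concave $\omega_h$ with $\omega_h(0+)=0$ and $|h(u)-h(v)|\le\omega_h(|u-v|)$, so that Jensen's inequality gives $\EE[\omega_h(Z)]\le\omega_h(\EE[Z])\le\omega_h(\EE[Z^2]^{1/2})$ for any nonnegative $Z$. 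This converts $L^2$-control of the state differences into smallness of the cost differences with no growth or moment bound on $h$ itself. I would also note at the outset that the measure-valued argument in $f(t,X_t,\pi_t(X_t))$ on the right-hand side is shorthand for the relaxed cost $\tilde f(t,x,\pi_t(x))=\int_A f(t,x,a)\pi_t(x)(da)$, since $f$ only accepts an action, not a distribution.

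\textbf{Terminal cost.} Since $X_T=X_{t_N}$, I would bound $|\EE[g(\hat X_{t_N})]-\EE[g(X_T)]|\le\EE[\omega_g(|\hat X_{t_N}-X_{t_N}|)]\le\omega_g(\EE[|\hat X_{t_N}-X_{t_N}|^2]^{1/2})\le\omega_g(C/\sqrt N)\to 0$, using the concave modulus, Jensen, and Theorem~\ref{thm:main-X}. This needs nothing beyond the theorem.

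\textbf{Running cost.} The conceptual heart is an exact averaging step. Because $\hat a_n$ is, conditionally on $\cF_{t_n}$, distributed as $\pi(\hat X_{t_n})$ while $\hat X_{t_n}$ is $\cF_{t_n}$-measurable, the tower property yields $\EE[f(t_n,\hat X_{t_n},\hat a_n)]=\EE[\tilde f(t_n,\hat X_{t_n},\pi(\hat X_{t_n}))]$; this is precisely where the sampled (mixed) cost collapses onto the relaxed cost, mirroring at the level of costs the relation between \eqref{fo:X_hat_n} and \eqref{fo:X_tilde_n}. It then remains to compare the Riemann sum $\frac{T}{N}\sum_{n=0}^{N-1}\tilde f(t_n,\hat X_{t_n},\pi(\hat X_{t_n}))$ with $\int_0^T\tilde f(t,X_t,\pi_t(X_t))\,dt$, which I would do in two sub-steps. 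First, replacing $\hat X_{t_n}$ by $X_{t_n}$ costs at most $\frac{T}{N}\sum_{n}\EE[\omega_{\tilde f}(|\hat X_{t_n}-X_{t_n}|)]\le T\,\omega_{\tilde f}(C/\sqrt N)$ by the same concave-modulus/Jensen argument applied termwise and summed, using $\EE[|\hat X_{t_n}-X_{t_n}|^2]\le\EE[\max_m|\hat X_{t_m}-X_{t_m}|^2]\le C^2/N$. Second, replacing the grid-frozen sum by the integral is a pure time-discretization error: for $t\in[t_n,t_{n+1}]$ one has $\EE[|X_t-X_{t_n}|^2]\le C/N$, a standard a priori estimate following from the boundedness of $b$, the Lipschitz continuity of $\sigma$ and the finiteness of the second moment of $X_0$, so this contribution is bounded by $T\,\omega_{\tilde f}((C/N)^{1/2})$ plus, in the time-dependent reading, a term controlled by the modulus of $t\mapsto\tilde f(t,\cdot,\cdot)$ over intervals of length $T/N$; both vanish as $N\to\infty$.

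\textbf{Main obstacle.} Everything above is routine once Theorem~\ref{thm:main-X} is in hand, except for verifying that the relaxed cost $x\mapsto\tilde f(x,\pi(x))=\int_A f(x,a)\pi(x)(da)$ genuinely admits a modulus $\omega_{\tilde f}$. Writing $\tilde f(x)-\tilde f(y)=\int_A[f(x,a)-f(y,a)]\pi(x)(da)+\int_A f(y,a)[\pi(x)-\pi(y)](da)$, the first integral is controlled by $\omega_f(|x-y|)$ via the joint uniform continuity of $f$ (which gives $\sup_a|f(x,a)-f(y,a)|\le\omega_f(|x-y|)$). The delicate term is the second: pairing the Lipschitz continuity of $x\mapsto\pi(x)$ in the Wasserstein (or bounded-Lipschitz) metric against $f(y,\cdot)$ requires $f(y,\cdot)$ to be bounded-Lipschitz in the action variable, which uniform continuity alone need not provide when $A$ is unbounded. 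This is the genuine technical point: one must either take $A$ compact (so that uniform continuity upgrades to a usable bounded modulus in $a$), or assume $f$ bounded with Lipschitz action-dependence, or otherwise arrange that this pairing is legitimate. Under such a hypothesis $\omega_{\tilde f}$ exists and the two estimates above close the argument.
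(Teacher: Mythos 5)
The paper states this corollary \emph{without proof}: it is offered as a direct by-product of Theorem~\ref{thm:main-X} ("the convergence of the state trajectories being uniform in time, we derive as a by-product the convergence of the corresponding expected aggregated costs"). Your reconstruction is correct in substance and is clearly the argument the paper has in mind: the strong $L^2$ estimate of Theorem~\ref{thm:main-X}, converted into convergence of expectations via a concave modulus of continuity and Jensen's inequality; the tower-property identity $\EE[f(t_n,\hat X_{t_n},\hat a_n)]=\EE\bigl[\tilde f\bigl(t_n,\hat X_{t_n},\pi(\hat X_{t_n})\bigr)\bigr]$, which is precisely where the sampled cost collapses onto the relaxed cost; and a standard time-increment estimate $\EE[|X_t-X_{t_n}|^2]\le C/N$ to pass from the Riemann sum to the integral. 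You have therefore filled in details the paper omits rather than taken a different route, and your reading of $f(t,X_t,\pi_t(X_t))$ as the relaxed cost $\tilde f$ is the only one that typechecks.

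Your "main obstacle" is a genuine observation about the statement itself, not a defect of your argument, but one remark sharpens it. In the paper's proof of Theorem~\ref{thm:main-X}, the term $(ii)$ is bounded by $\|b\|_\infty\|\pi\|_{Lip}$, i.e.\ a bounded (not Lipschitz-in-$a$) integrand is paired against $\pi(\tilde X_{t_m})-\pi(\hat X_{t_m})$; this estimate is only valid if $\|\pi\|_{Lip}$ is the Lipschitz constant of $\pi$ with respect to the total-variation (or bounded-Lipschitz) distance on $\cP(A)$, not the Wasserstein-$1$ distance. Under that same reading of the standing assumptions, your delicate second term $\int_A f(y,a)[\pi(x)-\pi(y)](da)$ is controlled by $\|f\|_\infty\|\pi\|_{Lip}|x-y|$, so the missing hypothesis is boundedness of $f$ rather than Lipschitz action-dependence; for unbounded, merely uniformly continuous $f$ (which has at most linear growth) one would additionally need a truncation/uniform-integrability argument resting on moment bounds for $\hat X_{t_n}$ and for the family of measures $\pi(x)$. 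Either way your conclusion stands: uniform continuity of $f$ alone, as literally stated, does not close the argument, and one of the supplements you list (compact $A$, or bounded $f$, or bounded-Lipschitz action dependence) must be added — a caveat the paper silently bypasses by giving no proof at all.
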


\section{\textbf{Numerical experiments}}
\label{se:numeric}

In this section, for the sake of illustration, we present experiments that compare the state process with relaxed control and the state process with mixed randomized actions. Since we cannot simulate processes in continuous time, instead of simulating the process with relaxed dynamics \eqref{fo:continuous_dynamics}, we simulate its discrete-time approximation~\eqref{eq:relaxed-discrete-time}. Recall that the error induced by this approximation is bounded in~\eqref{eq:rate-relaxed-time-discretization}. To simulate this discrete-time process, we compute the integral in the drift (and volatility, if needed) using Gauss-Hermite quadrature with $10$ points.
We consider three different settings:
\begin{itemize}
	\item {\bf Setting 1:} $b(x,a) = a$, $\sigma(x,a) = 0.1$,
	\item {\bf Setting 2:} $b(x,a) = \tanh(a)$, $\sigma(x,a) = 0.1$,
	\item {\bf Setting 3:} $b(x,a) = \tanh(a)$, $\sigma(x,a) = 0.1 + c^a_\sigma a$ where $c^a_\sigma$ is a constant.
\end{itemize}
Note that the third setting does not fit the assumptions of Theorem~\ref{thm:main-X} because in this setting the volatility is controlled. 
For the policy, we consider $\pi(x) = \mathcal{N}(1-x, \sigma_\pi)$ where $\sigma_\pi$ is a constant. The mean is chosen so that in the first setting above, the dynamics attracts the state towards $x=1$. We take $T=5$ and vary the number of steps $N$.

{\bf Setting 1. } Figure~\ref{fig:cont-disc-RL-linear-b-fit} shows that the linear (in $1/N$) upper bound for
$$
    \EE\Bigl[\sup_{0\le \ell\le N}\bigl|\tilde X_{t_\ell}- \hat X_{t_\ell}\bigr|^2\Bigr]
$$
proven in \eqref{fo:final}, is sharp. Using a log-scale, we fit a line and observe that the slope is close to $-1$, which matches the upper bound in~\eqref{fo:final}. The values of the expected maximum square error were obtained by averaging $10$ runs, each expectation being computed with $10000$ trajectories. Figure~\ref{fig:cont-disc-RL-linear-b-fit} also displays the standard deviation as a shaded area around the curve. However, this area is hard to detect because the standard deviation is on the order of $10^{-4}$. 

\begin{figure}[H]
    \centering
    \includegraphics[width=0.48\linewidth]{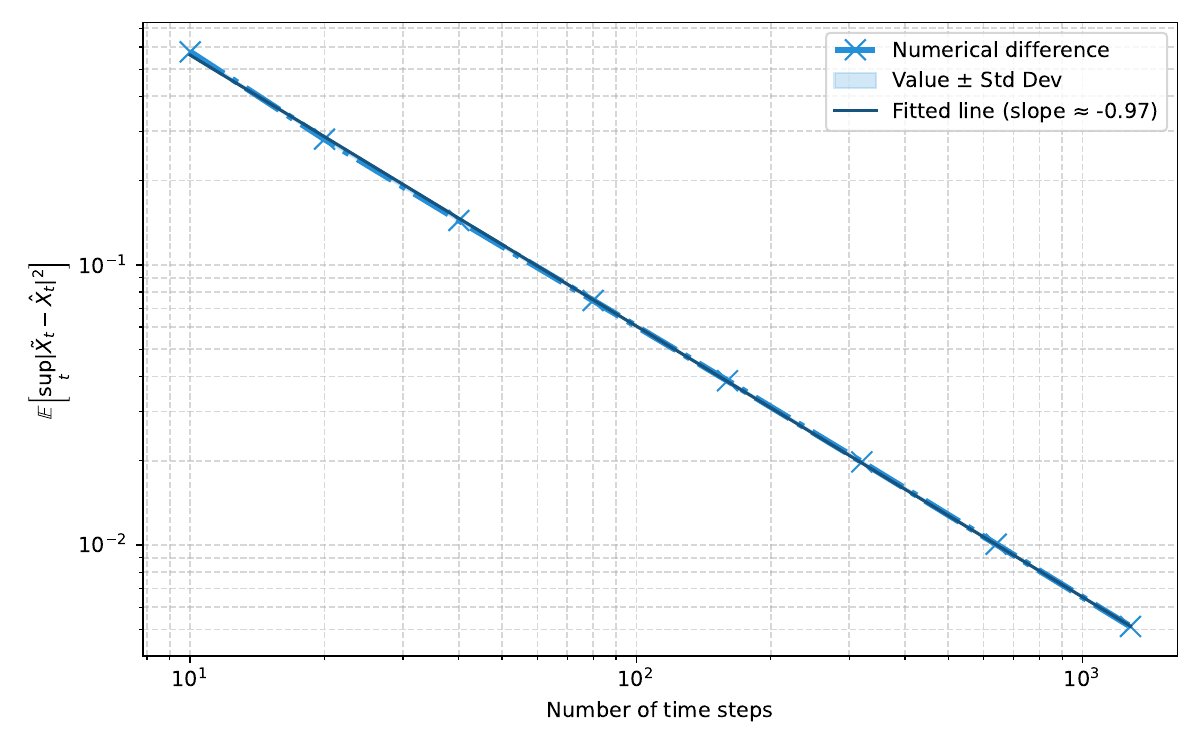}
    \caption{Setting 1. Convergence rate.}
    \label{fig:cont-disc-RL-linear-b-fit}
\end{figure}
Figure~\ref{fig:cont-disc-RL-linear-b-traj} shows four couples of trajectories of the processes $\tilde \bX$ and $\hat\bX$ with $N=100$ in the left pane and $N=1000$ in the right pane, using the same initial points and the same Brownian increments: the Brownian increments for $N=100$ are obtained by summing the increments in the case $N=1000$. As expected, with smaller time steps, the trajectories are closer.

\begin{figure}[H]
    \centering
    \includegraphics[width=0.48\linewidth]{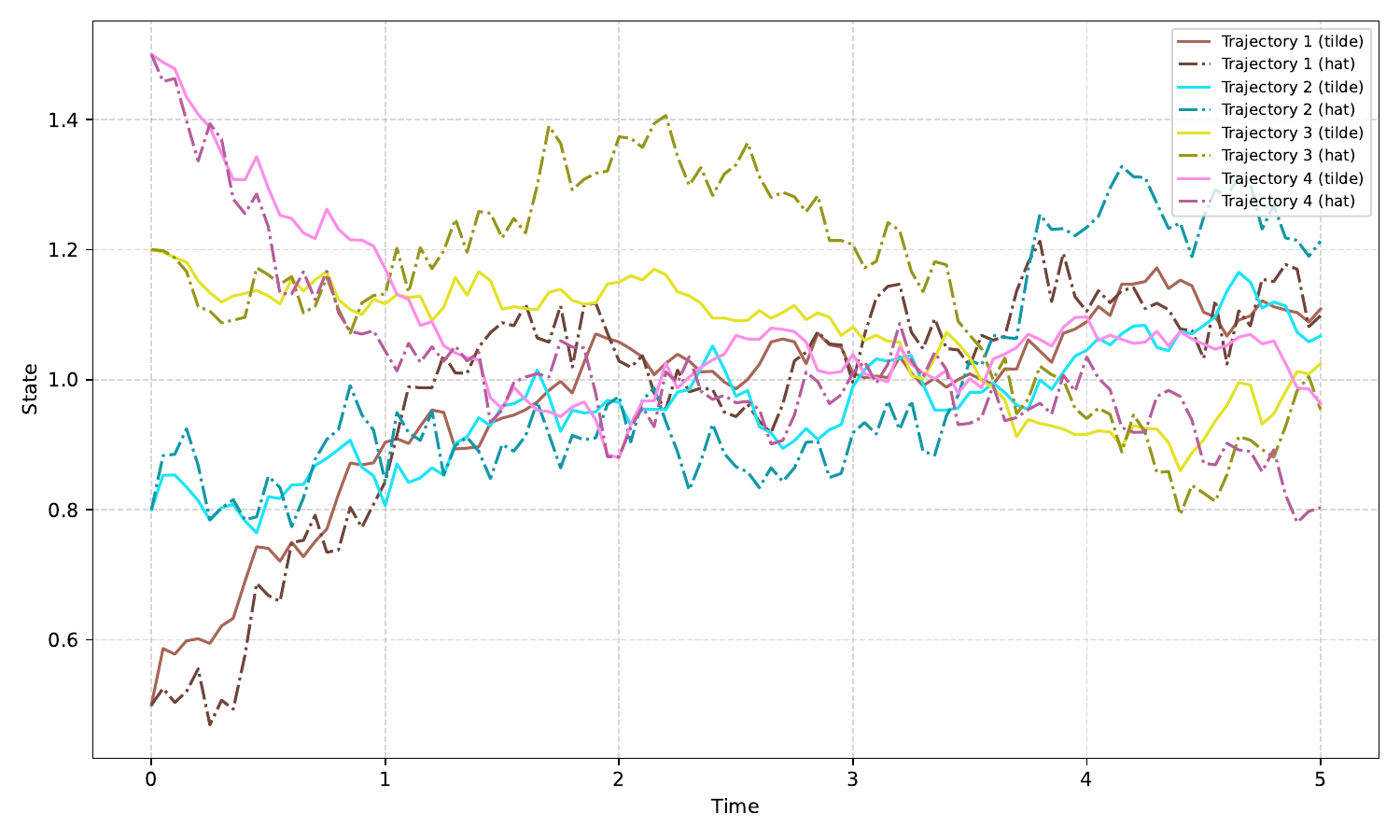}%
    \includegraphics[width=0.48\linewidth]{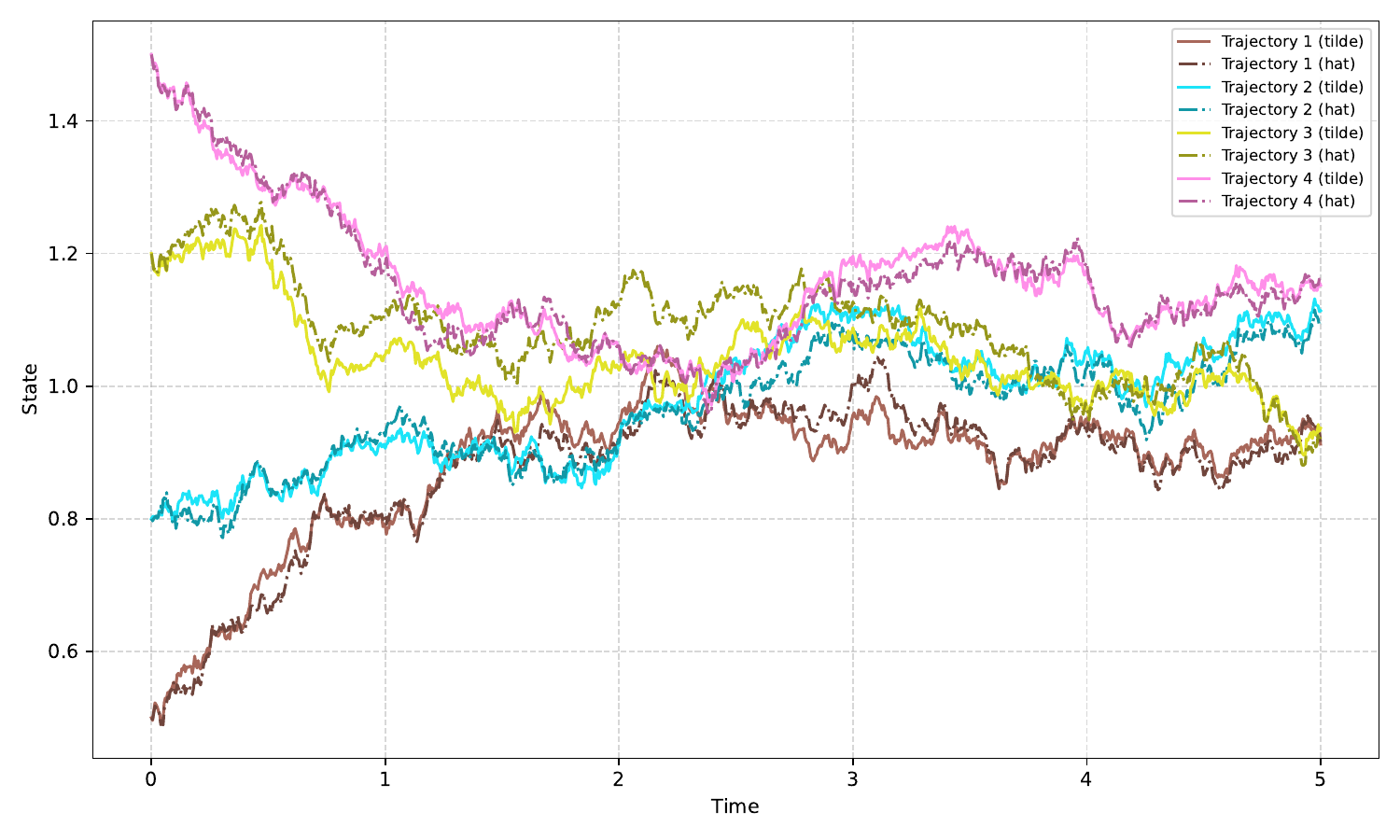}
    \caption{Setting 1. $4$ samples of couples of trajectories with $N=100$ (left) and $N=1000$ (right).}
    \label{fig:cont-disc-RL-linear-b-traj}
\end{figure}

{\bf Setting 2. } In the same way, the left plot in Figure~\ref{fig:cont-disc-RL-tanh-b-fit-traj} confirms the accuracy of the upper bound \eqref{fo:final}. Here again we observe a rate of convergence which is very close to the theoretical upper bound. As before, the behavior of four couples of trajectories with $N=1000$ are displayed in the right plot of Figure~\ref{fig:cont-disc-RL-tanh-b-fit-traj} using the same initial values, and Brownian increments as in Figure~\ref{fig:cont-disc-RL-linear-b-traj}.

\begin{figure}[H]
    \centering
    \includegraphics[width=0.48\linewidth]{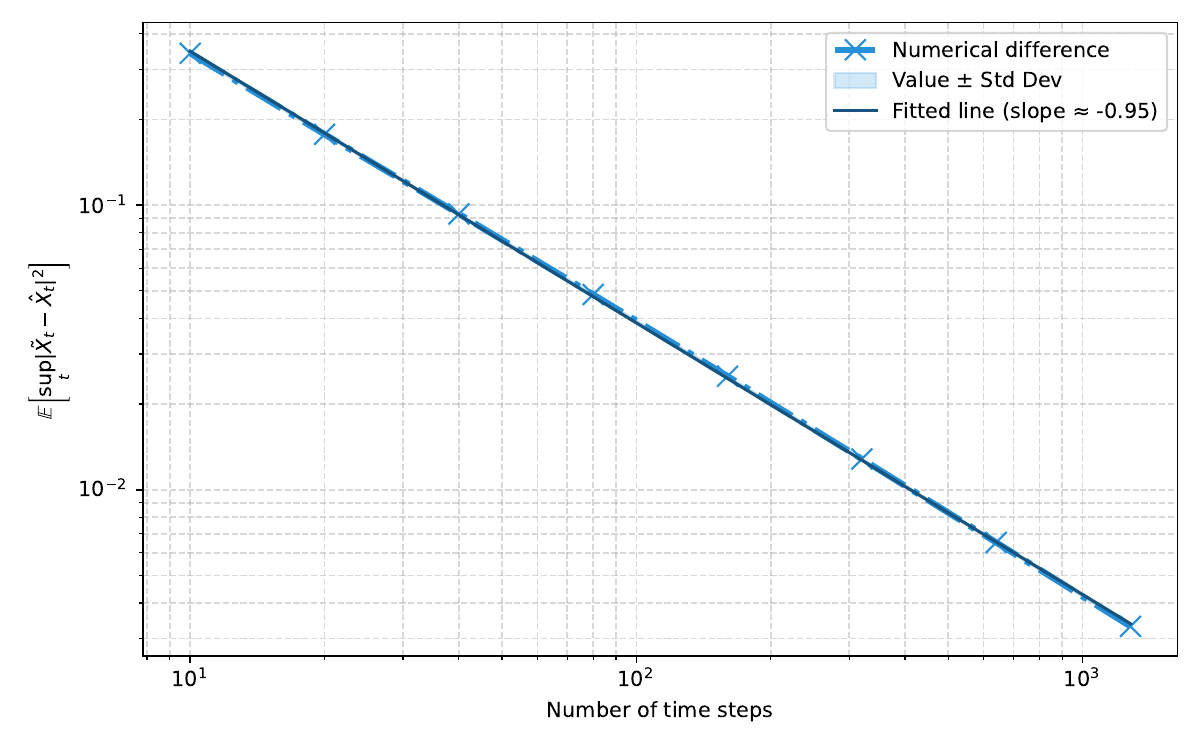}%
    \includegraphics[width=0.48\linewidth]{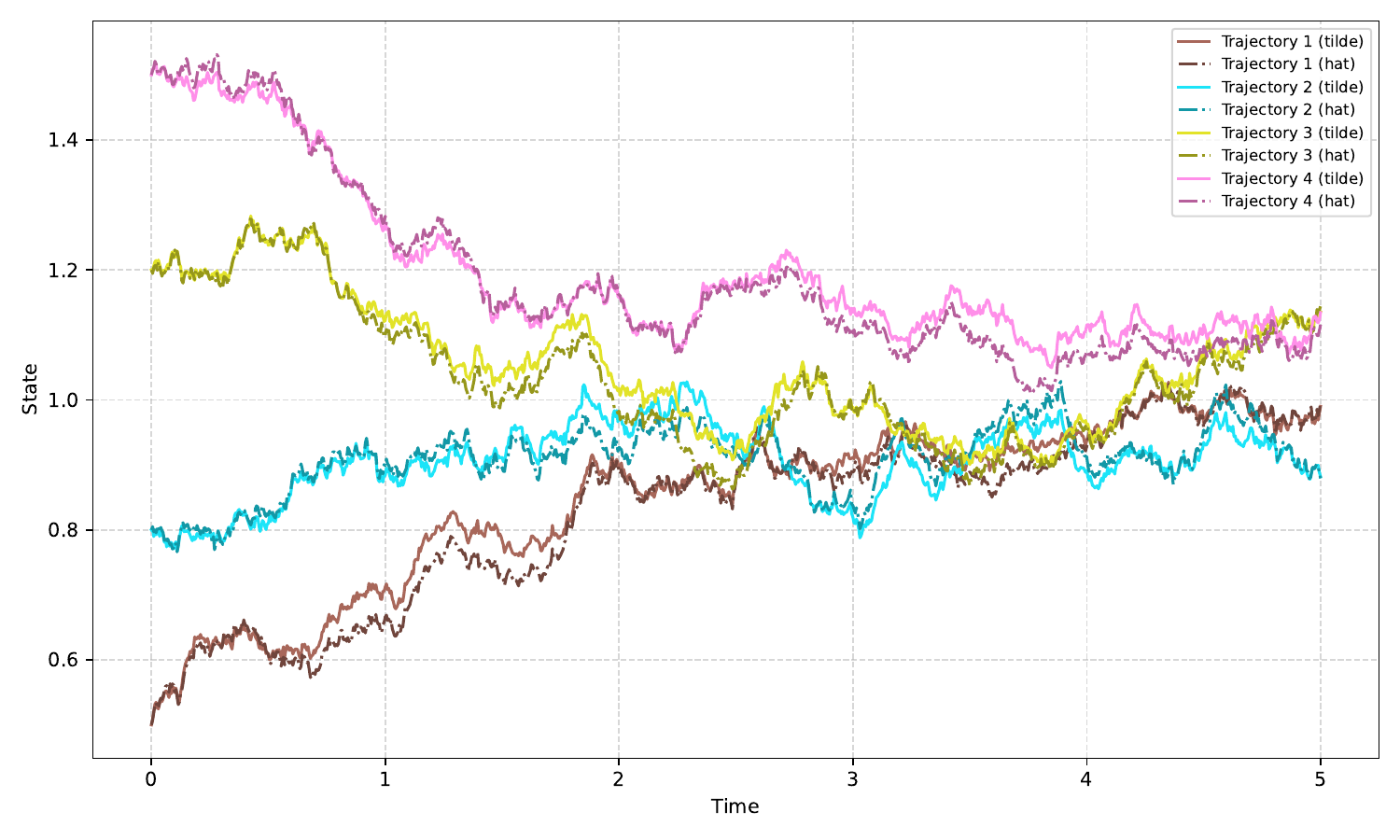}
    \caption{Setting 2. Left: Convergence rate. Right: 4 sample trajectories with $N=1000$.}
    \label{fig:cont-disc-RL-tanh-b-fit-traj}
\end{figure}

{\bf Setting 3. } 
For the state process with controlled volatility, we consider two possible relaxed volatilities for the purpose of the numerics in this setting: a naive approach with 
$$
    \bar\sigma(x)=\int_A\sigma(x,a)\pi(x)(da)
$$
and an alternative approach proposed in~\cite{Wang_Z_Z_RL} with
$$
    \bar\sigma(x)=\sqrt{\int_A\sigma(x,a)\sigma^*(x,a)\,\pi(x)(da)}.
$$
These two approaches are discussed in more detail below and correspond respectively to~\eqref{fo:naive_randomization} and~\eqref{fo:last_attempt}.
Figure~\ref{fig:cont-disc-RL-tanh-b-fit-contsigma} shows how the difference in the left-hand side of~\eqref{fo:final} decreases as  $N$ increases, for several values of the constant $c^a_\sigma$, namely $c^a_\sigma \in \{0.01, 0.05, 0.1, 0.2\}$, for the two proposals of randomization of the volatility discussed in the following section. Results for the use of formula \eqref{fo:naive_randomization} are given in the left pane, while results in the right pane were computed with formula \eqref{fo:last_attempt}. Though different, the results are very similar. As $N$ increases, the curve's decay is slower than in the uncontrolled volatility, and the curve is further from from the linear fit. This numerical experiment validates empirically the conjecture that the \emph{linear rate} of convergence in~\eqref{fo:final} obtained in the case of uncontrolled volatility does not hold in the case of controlled volatility, whether it is with the naive approach or the alternative one.

\begin{figure}[H]
    \centering
    \includegraphics[width=0.48\linewidth]{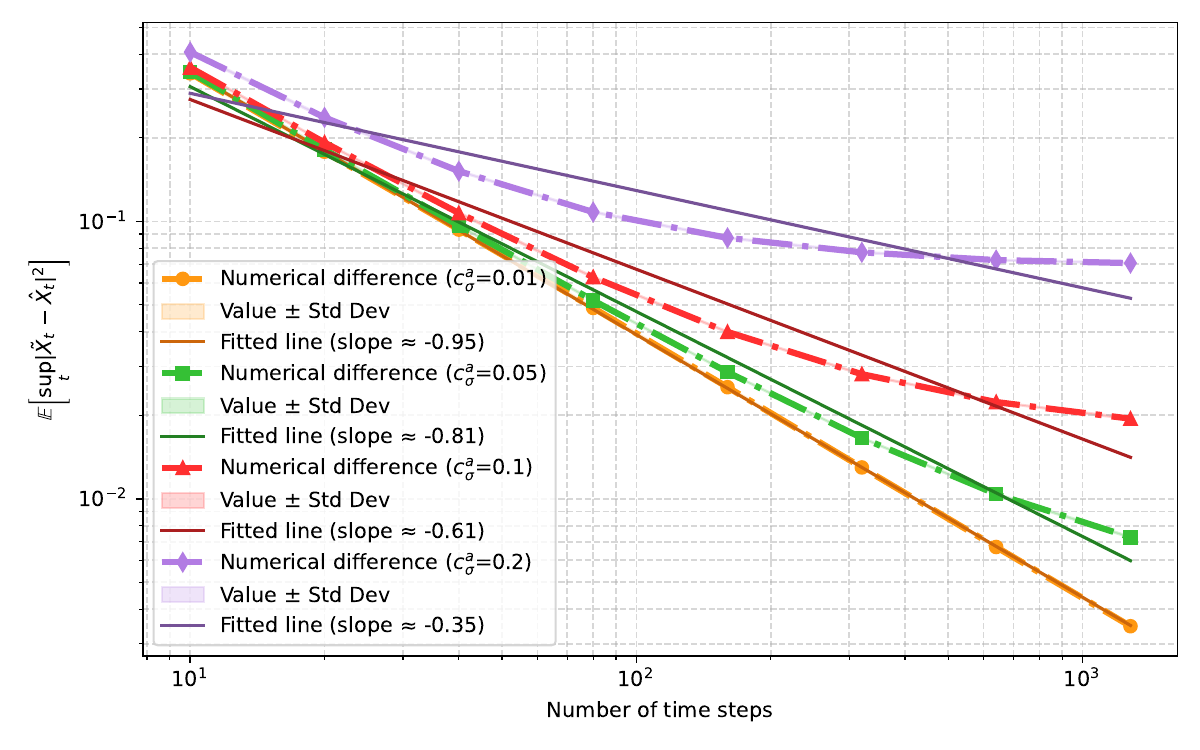}
    \includegraphics[width=0.48\linewidth]{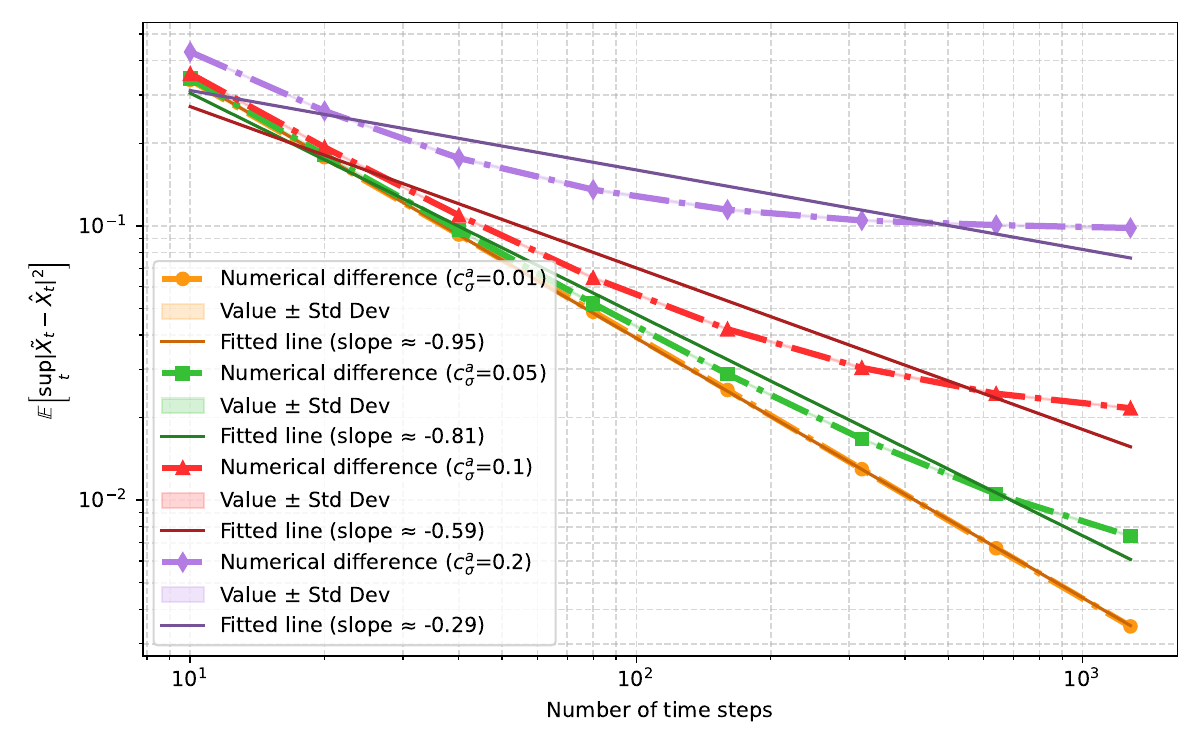}
    \caption{Setting 3. Comparison of convergence rate for several values of $c^a_\sigma$ using formula \eqref{fo:naive_randomization} (left) and formula \eqref{fo:last_attempt} (right).}
    \label{fig:cont-disc-RL-tanh-b-fit-contsigma}
\end{figure}

\section{\textbf{What is so Different when the Volatility is Controlled?}}
\label{se:discussion}

Prompted by the last numerical experiments reported above, we close the paper with a discussion of models based on state dynamics 
$$
	X_t=X_0+\int_0^tb(X_s,\alpha_s)ds+\int_0^t\sigma(X_s,\alpha_s)dW_s
$$
including a controlled volatility term. To alleviate the presentation, we consider, as above, policies that are independent of time. The first relaxation procedure that comes to mind is to use, for a given Markovian relaxed control $\bpi=(\pi(x))_{x\in\RR^d}$, a relaxed state dynamical equation driven by the drift and volatility coefficients:
\begin{equation}
    \label{fo:naive_randomization}
    \bar b(x)=\int_A b(x,a)\pi(x)(da),
    \qquad\text{and}\qquad
    \bar\sigma(x)=\int_A\sigma(x,a)\pi(x)(da).
\end{equation}
Note that $\bar b(x)=\tilde b\bigl(x,\pi(x)\bigr)$ defined in~\eqref{fo:relaxed_coefs} and that by assumption, $\bar\sigma$ is Lipschitz.
Let us assume for a moment that we are willing to enjoy the benefits from the compactification effects of the use of relaxed controls, and accept the fact that the dynamics have been changed: we are now controlling a state and facing costs which are no longer the physical state and costs we were originally supposed to bring to optimality. We could work with dynamics for the new \emph{relaxed} state (still denoted by $X$ for simplicity) given by:
\begin{equation}
\label{fo:new_relaxed}
dX_t=\bar b(X_t)dt + \bar\sigma(X_t)d\bar W_t
\end{equation}
where $(\bar W_t)_{t\ge 0}$ is a Brownian motion defined on a probability space that possibly extends the probability space on which the original Brownian motion  $( W_t)_{t\ge 0}$ is defined.
Given the regular time grid with mesh $T/N$, using the Euler-Maruyama scheme and the notation $\bar \epsilon_{n+1}=\sqrt{T/N}[\bar W_{t_{n+1}}-\bar W_{t_n}]$, we would be led to consider the discrete-time process
\begin{equation}
\label{fo:X_tilde_n}
    \tilde X_{t_n}
    =\tilde X_0+\frac{T}{N}\sum_{m=0}^{n-1}\int_A b\bigl(\tilde X_{t_m},a\bigr)\pi(\tilde X_{t_m})(da)
    +\sum_{m=0}^{n-1}\int_A\sigma\bigl(\tilde X_{t_m},a\bigr)\pi(\tilde X_{t_m})(da)\;\tilde \epsilon_{m+1}.
\end{equation}
Next, in line with our goal to mimic practical RL implementations, the dynamics relevant to the mixed control based approach would be:
\begin{equation}
\label{fo:X_hat_n}
    \hat X_{t_n}
    =\hat X_0+\frac{T}{N}\sum_{m=0}^{n-1} b\bigl(\hat X_{t_m},\hat a_m \bigr)
    +\sum_{m=0}^{n-1}\sigma\bigl(\hat X_{t_m},\hat a_m\bigr)\;\hat\epsilon_{m+1},
\end{equation}
where we use the notation $\hat \epsilon_{n+1}=\sqrt{T/N}[ W_{t_{n+1}}- W_{t_n}]$ for the increments of the (original) physical Brownian motion, and
where each random variable $\hat a_m$ is independent of the past up to (and including) time $t_m$, and $\hat a_m\sim \pi\bigl(\hat X_{t_m}\bigr)$, i.e. $\hat a_m$ is distributed according to the probability distribution $\pi\bigl(\hat X_{t_m}\bigr)$. Accordingly, we could try to prove an upper bound similar to \eqref{fo:final} and complete the proof of the desired convergence. Unfortunately, as illustrated in the numerical experiments reported in {\bf Setting 3} above, the upper bound we can prove is sub-linear, and this is not sufficient to complete the convergence proof. Worse, there are deeper reasons why this randomization is not satisfactory. Indeed, there exist simple examples that show that the value of the optimum may not be the same.

\vskip 2pt
As an effort to resolve the issues with the above naive randomization of the volatility, and  presumably inspired by some of the outcomes of the discussion below, a more sophisticated form of randomization was touted in~\cite{Wang_Z_Z_RL} with the use of  the alternative
\begin{equation}
    \label{fo:last_attempt}
\bar\sigma(x)=\sqrt{\int_A\sigma(x,a)\sigma^*(x,a)\,\pi(x)(da)}.
\end{equation}
Still, the numerical evidence provided by the two plots of Figure \ref{fig:cont-disc-RL-tanh-b-fit-contsigma} shows that in both cases, the rate of convergence is at best sub-linear. 

\vskip 2pt
We believe that~\cite{ElKaroui_et_al} offers an authoritative resolution to the above quandary: rather than randomizing the stochastic differential equation giving the dynamics of the state, one should randomize the corresponding \emph{martingale problem}, hence the infinitesimal generator in the Markovian case. In a sense that we highlight below, this amounts to randomizing the quadratic variation of the diffusion part instead of the volatility. In any case, it turns out that the new optimization problem over relaxed controls has the same value as the original optimization, and compactness properties of the spaces of relaxed controls help prove that these optima are attained. As solutions of martingale problems can also be shown to solve stochastic differential equations, it is tempting to believe that the RL mixed control randomization demonstrated earlier when only the drift is controlled, could still work if one were to use the proper randomized stochastic differential equation. For any $\pi\in\cP(A)$, the diffusion coefficient in the randomization of the martingale problem is given by 
$$
	\frak{a}(x,\pi)=\int \sigma(x,a)\sigma^*(x,a)\, \pi(da),
$$
while the diffusion coefficient derived from the naive randomization suggested above in~\eqref{fo:naive_randomization} would be:
$$
\tilde a(x,\pi)=\tilde\sigma(x,\pi)\tilde\sigma^*(x,\pi)=\Bigl(\int_A\sigma(x,a)\pi(da)\Bigr)\Bigl(\int_A\sigma^*(x,a)\pi(da)\Bigr),
$$
which is obviously different. However, for each $x\in\RR^d$ and $\pi\in\cP(A)$, the matrix $\frak{a}(x,\pi)-\tilde a(x,\pi)$ is symmetric and non-negative definite, so we can construct a symmetric square root $\tilde s(x,\pi)$ satisfying 
$$
	\frak{a}(x,\pi) = \tilde a(x,\pi) + \tilde s(x,\pi)\tilde s^*(x,\pi).
$$
The upshot of this discussion is that, for any Markovian relaxed control $\bpi=(\pi(x))_{x\in\RR^d}$, if $\bX=(X_t)_{t\ge 0}$ is the solution of a relaxed martingale problem with drift and diffusion coefficients 
\begin{equation}
    \label{fo:martingale_randomization}
\tilde b(x,\pi)=\int_A b(x,a)\pi(da),
\qquad\text{and}\qquad
\frak{a}(x,\pi)=\int_A\sigma(x,a)\sigma^*(x,a)\,\pi(da),
\end{equation}
then, the solution of the relaxed martingale problem can be given (in law) by the solution of the stochastic differential equation
\begin{equation}
    \label{fo:last_sde}
d\tilde X^\pi_t=\tilde b\bigl(\tilde X^\pi_t,\pi(X^\pi_t)\bigr)\,dt
+\tilde\sigma\bigl(\tilde X^\pi_t,\pi(X^\pi_t)\bigr)\,dW_t
+\tilde s\bigl(\tilde X^\pi_t,\pi(X^\pi_t)\bigr)\,dB_t
\end{equation}
for two independent Brownian motions $\bW=(W_t)_{t\ge 0}$ and 
$\bB=(B_t)_{t\ge 0}$. Even though the control of the solution of such a stochastic differential equation would lead to the right optimum, it is not possible to use the mixed strategy approach of RL implementations because, not only the time evolution of such a state does not easily relate to the time evolution of the original state, but most importantly, the extra volatility term $\tilde s\bigl(\tilde X^\pi_t,\pi(X^\pi_t)\bigr)$ is not of the form $\int_A s\bigl(\tilde X^\pi_t,a\bigr)\pi(X^\pi_t)(da)$  for a function $\RR^d\times A\ni (x,a) \mapsto s(x,a)$ in general.

\bibliographystyle{apalike}
\bibliography{mfqrl-bib}

\begin{thebibliography}{}

\bibitem[Aurell et~al., 2022a]{aurell2022finite}
Aurell, A., Carmona, R., Dayan{\i}kl{\i}, G., and Lauri{\`e}re, M. (2022a).
\newblock Finite state graphon games with applications to epidemics.
\newblock {\em Dynamic Games and Applications}, 12(1):49--81.

\bibitem[Aurell et~al., 2022b]{aurell2022stochastic}
Aurell, A., Carmona, R., and Lauri{\`e}re, M. (2022b).
\newblock Stochastic graphon games: {II}. {T}he linear-quadratic case.
\newblock {\em Applied Mathematics \& Optimization}, 85(3):39.

\bibitem[Basei et~al., 2022]{basei2022logarithmic}
Basei, M., Guo, X., Hu, A., and Zhang, Y. (2022).
\newblock Logarithmic regret for episodic continuous-time linear-quadratic
  reinforcement learning over a finite-time horizon.
\newblock {\em Journal of Machine Learning Research}, 23(178):1--34.

\bibitem[Berthier et~al., 2022]{berthier2022non}
Berthier, E., Kobeissi, Z., and Bach, F. (2022).
\newblock A non-asymptotic analysis of non-parametric temporal-difference
  learning.
\newblock {\em Advances in Neural Information Processing Systems},
  35:7599--7613.

\bibitem[Bertsekas and Shreve, 1996]{BertsekasShreve}
Bertsekas, D. and Shreve, S.~E. (1996).
\newblock {\em Stochastic optimal control: the discrete-time case}, volume~5.
\newblock Athena Scientific.

\bibitem[Blackwell and Dubins, 1983]{BlackwellDubins}
Blackwell, D. and Dubins, L. (1983).
\newblock An extension of {S}korohod's almost sure convergence theorem.
\newblock {\em Proceedings of the American Mathematical Society}, 89:691--692.

\bibitem[Borkar, 2005]{Borkar_survey}
Borkar, V. (2005).
\newblock Controlled diffusion processes.
\newblock {\em Probability Surveys}, 2:213--244.

\bibitem[Carmona, 2016]{Carmona_SIAM}
Carmona, R. (2016).
\newblock {\em Lectures on BSDEs, Stochastic Control and Stochastic
  Differential Games}.
\newblock SIAM.

\bibitem[Carmona et~al., 2022]{carmona2022stochastic}
Carmona, R., Cooney, D.~B., Graves, C.~V., and Lauriere, M. (2022).
\newblock Stochastic graphon games: {I}. {T}he static case.
\newblock {\em Mathematics of Operations Research}, 47(1):750--778.

\bibitem[Carmona and Delarue, 2018]{CarmonaDelarue_book_I}
Carmona, R. and Delarue, F. (2018).
\newblock {\em Probabilistic theory of mean field games with applications.
  {I}}, volume~83 of {\em Probability Theory and Stochastic Modelling}.
\newblock Springer, Cham.
\newblock Mean field FBSDEs, control, and games.

\bibitem[Carmona et~al., 2023]{CarmonaLauriere_AAP}
Carmona, R., Lauri\`ere, M., and Tan, Z. (2023).
\newblock Model-free mean-field reinforcement learning: Mean-field {MDP} and
  mean-field {Q}-learning.
\newblock {\em The Annals of Applied Probability}, 33:5334–5381.

\bibitem[Doya, 2000]{doya2000reinforcement}
Doya, K. (2000).
\newblock Reinforcement learning in continuous time and space.
\newblock {\em Neural computation}, 12(1):219--245.

\bibitem[El~Karoui et~al., 1987]{ElKaroui_et_al}
El~Karoui, N., Huu~Nguyen, D., and Jeanblanc~Piqu\'e, M. (1987).
\newblock Compactiﬁcation methods in the control of degenerate diffusions.
\newblock {\em Stochastics}, 20:169–219.

\bibitem[Fleming, 1978]{fleming1978generalized}
Fleming, W. (1978).
\newblock Generalized solutions in optimal stochastic control, differential
  games and control theory 2 ({K}ingston conference 1976).
\newblock {\em Lect. Notes in Pure and Appl. Math}, 30.

\bibitem[Fleming and Nisio, 1984]{FlemingNisio}
Fleming, W. and Nisio, M. (1984).
\newblock On stochastic relaxed control for partially observed diﬀusions.
\newblock {\em Nagoya Mathematical Journal}, 93:71–108.

\bibitem[Fleming and Soner, 2006]{FlemingSoner}
Fleming, W. and Soner, M. (2006).
\newblock {\em Controlled Markov Processes and Viscosity Solutions}.
\newblock Springer verlag.

\bibitem[Frikha et~al., 2023]{frikha2023actor}
Frikha, N., Germain, M., Lauri{\`e}re, M., Pham, H., and Song, X. (2023).
\newblock Actor-critic learning for mean-field control in continuous time.
\newblock {\em Accepted to JMLR (arXiv preprint arXiv:2303.06993)}.

\bibitem[Hambly et~al., 2023]{hambly2023recent}
Hambly, B., Xu, R., and Yang, H. (2023).
\newblock Recent advances in reinforcement learning in finance.
\newblock {\em Mathematical Finance}, 33(3):437--503.

\bibitem[Han et~al., 2023]{han2023choquet}
Han, X., Wang, R., and Zhou, X.~Y. (2023).
\newblock Choquet regularization for continuous-time reinforcement learning.
\newblock {\em SIAM Journal on Control and Optimization}, 61(5):2777--2801.

\bibitem[Hu and Lauriere, 2022]{hu2022recent}
Hu, R. and Lauriere, M. (2022).
\newblock Recent developments in machine learning methods for stochastic
  control and games.
\newblock {\em Recent Developments in Machine Learning Methods for Stochastic
  Control and Games (May 13, 2022)}.

\bibitem[Huang et~al., 2022]{huang2022achieving}
Huang, Y., Jia, Y., and Zhou, X. (2022).
\newblock Achieving mean--variance efficiency by continuous-time reinforcement
  learning.
\newblock In {\em Proceedings of the Third ACM International Conference on AI
  in Finance}, pages 377--385.

\bibitem[Jacod and M{\'e}min, 1981]{JacodMemin_stable}
Jacod, J. and M{\'e}min, J. (1981).
\newblock Sur un type de convergence interm{\' e}diaire entre la convergence en
  loi et la convergence en probabilit{\'e}.
\newblock In {\em Séminaire de probabilit{\'e}s (Strasbourg)}, volume~15,
  pages 529--546. Springer.

\bibitem[Jacod and M{\'e}min, 2006]{JacodMemin}
Jacod, J. and M{\'e}min, J. (2006).
\newblock Weak and strong solutions of stochastic differential equations:
  existence and stability.
\newblock In {\em Stochastic Integrals: Proceedings of the LMS Durham
  Symposium, July 7--17, 1980}, pages 169--212. Springer.

\bibitem[Jia et~al., 2025]{jia2025accuracy}
Jia, Y., Ouyang, D., and Zhang, Y. (2025).
\newblock Accuracy of discretely sampled stochastic policies in continuous-time
  reinforcement learning.
\newblock {\em arXiv preprint arXiv:2503.09981}.

\bibitem[Jia and Zhou, 2023]{JiaZhou}
Jia, Y. and Zhou, X.~Y. (2023).
\newblock q-learning in continuous time.
\newblock {\em Journal of Machine Learning Research}, 24:1--61.

\bibitem[Kobeissi and Bach, 2022a]{kobeissi2022variance}
Kobeissi, Z. and Bach, F. (2022a).
\newblock On a variance reduction correction of the temporal difference for
  policy evaluation in the stochastic continuous setting.

\bibitem[Kobeissi and Bach, 2022b]{kobeissi2022temporal}
Kobeissi, Z. and Bach, F. (2022b).
\newblock Temporal difference learning with continuous time and state in the
  stochastic setting.
\newblock {\em arXiv preprint arXiv:2202.07960}.

\bibitem[Li et~al., 2024]{li2024policy}
Li, N., Li, X., and Xu, Z.~Q. (2024).
\newblock Policy iteration reinforcement learning method for continuous-time
  linear-quadratic mean-field control problems.
\newblock {\em IEEE Transactions on Automatic Control}.

\bibitem[Liang et~al., 2024]{liang2024actor}
Liang, H., Chen, Z., and Jing, K. (2024).
\newblock Actor-critic reinforcement learning algorithms for mean field games
  in continuous time, state and action spaces.
\newblock {\em Applied Mathematics \& Optimization}, 89(3):72.

\bibitem[Motte and Pham, 2021]{MottePham}
Motte, M. and Pham, H. (2021).
\newblock Mean-field markov decision processes with common noise and open-loop
  controls.

\bibitem[Munos, 2000]{munos2000study}
Munos, R. (2000).
\newblock A study of reinforcement learning in the continuous case by the means
  of viscosity solutions.
\newblock {\em Machine Learning}, 40:265--299.

\bibitem[Nash, 1951]{Nash_1951}
Nash, J. (1951).
\newblock Non-cooperative games.
\newblock {\em Annals of Mathematics}, 54:286 -- 295.

\bibitem[Sun, 2006]{Sun}
Sun, Y. (2006).
\newblock The exact law of large numbers via {F}ubini extension and
  characterization of insurable risks.
\newblock {\em Journal of Economic Theory}, 126.

\bibitem[Sutton and Barto, 2018]{SuttonBarto}
Sutton, R.~S. and Barto, A.~G. (2018).
\newblock {\em Reinforcement Learning: An Introduction}.
\newblock MIT Press.

\bibitem[Szpruch et~al., 2021]{szpruch2021exploration}
Szpruch, L., Treetanthiploet, T., and Zhang, Y. (2021).
\newblock Exploration-exploitation trade-off for continuous-time episodic
  reinforcement learning with linear-convex models.
\newblock {\em arXiv preprint arXiv:2112.10264}.

\bibitem[Szpruch et~al., 2024]{szpruch2024optimal}
Szpruch, L., Treetanthiploet, T., and Zhang, Y. (2024).
\newblock Optimal scheduling of entropy regularizer for continuous-time
  linear-quadratic reinforcement learning.
\newblock {\em SIAM Journal on Control and Optimization}, 62(1):135--166.

\bibitem[Tallec et~al., 2019]{tallec2019making}
Tallec, C., Blier, L., and Ollivier, Y. (2019).
\newblock Making deep {Q}-learning methods robust to time discretization.
\newblock In {\em International Conference on Machine Learning}, pages
  6096--6104. PMLR.

\bibitem[Treven et~al., 2023]{treven2023efficient}
Treven, L., H{\"u}botter, J., Sukhija, B., Dorfler, F., and Krause, A. (2023).
\newblock Efficient exploration in continuous-time model-based reinforcement
  learning.
\newblock {\em Advances in Neural Information Processing Systems},
  36:42119--42147.

\bibitem[Wang et~al., 2020]{Wang_Z_Z_RL}
Wang, H., Zariphopoulou, T., and Zhou, X.~Y. (2020).
\newblock Reinforcement learning in continuous time and space: A stochastic
  control approach.
\newblock {\em The Journal of Machine Learning Research}, 21(1):8145--8178.

\bibitem[Wang and Zhou, 2020]{wang2020continuous}
Wang, H. and Zhou, X.~Y. (2020).
\newblock Continuous-time mean--variance portfolio selection: {A} reinforcement
  learning framework.
\newblock {\em Mathematical Finance}, 30(4):1273--1308.

\bibitem[Watkins and Dayan, 1992]{watkins1992q}
Watkins, C.~J. and Dayan, P. (1992).
\newblock Q-learning.
\newblock {\em Machine learning}, 8(3):279--292.

\bibitem[Wei and Yu, 2025]{wei2025continuous}
Wei, X. and Yu, X. (2025).
\newblock Continuous time q-learning for mean-field control problems.
\newblock {\em Applied Mathematics \& Optimization}, 91(1):10.

\bibitem[Wei et~al., 2024]{wei2024unified}
Wei, X., Yu, X., and Yuan, F. (2024).
\newblock Unified continuous-time q-learning for mean-field game and mean-field
  control problems.
\newblock {\em arXiv preprint arXiv:2407.04521}.

\bibitem[Wu and Li, 2024]{wu2024reinforcement}
Wu, B. and Li, L. (2024).
\newblock Reinforcement learning for continuous-time mean-variance portfolio
  selection in a regime-switching market.
\newblock {\em Journal of Economic Dynamics and Control}, 158:104787.

\bibitem[Yong and Zhou, 1999]{YongZhou}
Yong, J. and Zhou, X. (1999).
\newblock {\em Stochastic Controls: Hamiltonian Systems and {HJB} Equations}.
\newblock Springer Verlag.

\bibitem[Young, 1969]{Young}
Young, L. (1969).
\newblock {\em Calculus of variations and control theory}.
\newblock W.B. Saunders, Philadelphia.

\end{thebibliography}

\end{document}